\title[]{Wreath product in automorphism groups of graphs}
\author{Mariusz Grech, Andrzej Kisielewicz}
\address{Institute of Mathematics, University of Wroclaw \\ 
pl.Grunwaldzki 2, 50-384 Wroclaw, Poland}
\email{Mariusz.Grech@math.uni.wroc.pl}
\thanks{Supported in part by Polish NCN grant 2016/21/B/ST1/03079}
\keywords{Colored graph, automorphism group, permutation group, wreath product.}
\begin{document}

\newtheorem{Theorem}{Theorem}[section]
\newtheorem{Cor}[Theorem]{Corollary}
\newtheorem{Lemma}[Theorem]{Lemma}
\newtheorem{Fact}[Theorem]{Fact}
\newtheorem{Proposition}[Theorem]{Proposition}
\newtheorem{Corollary}[Theorem]{Corollary}

\newcommand{\row}[2]{I_{#2}\!\times\! {#1}}
\newcommand{\Wr} {\mbox{\hspace{0.3ex}$\wr$\hspace{-0.2ex}$\wr$\hspace{0.3ex}}}

\newcommand{\clo}[1]{\overline{#1}}
\newcommand{\cld}[1]{\overleftarrow{#1}}


\begin{abstract}
The automorphism group of the composition of graphs $G\circ H$ contains the wreath product $Aut(H)\wr Aut(G)$
of the automorphism groups of the corresponding graphs. The classical problem considered by Sabidussi and Hemminger was under what conditions $G\circ H$ has no other automorphisms. In this paper we deal with the converse. If the automorphism group of a graph (or a colored graph or digraph) is the wreath product $A\wr B$ of permutation groups, then the graph must be the result of the corresponding construction. The question we consider is whether $A$ and $B$ must be the automorphism groups of graphs involved in the construction. We solve this problem, generally in positive, for the wreath product in its natural imprimitive action (which refers to the results by Sabidussi and Hemminger). Yet, we consider also the same problems for the wreath product in its product action, which turns out to be more complicated and leads to interesting open questions involving other combinatorial structures.
\end{abstract}

\maketitle

Wreath product is one of the most important combinatorial constructions in the theory of groups and permutation groups. It is enough to mention its crucial role in the O'Nan-Scot theorem and the classification of the maximal subgroups of $S_n$. Also, many important configurations, as the Hamming scheme $H(n,m)$, involve the wreath product of permutation groups. The corresponding structure in the theory of graphs is the \emph{composition} of graphs, called also the \emph{lexicographic product} or more recently the \emph{wreath product} of graphs (see \cite{ara,DM}).  

The automorphism group $Aut(G\circ H)$ of the composition of graphs $G$ and $H$ contains the wreath product $Aut(H)\wr Aut(G)$ of the automorphism groups of the graphs $H$ and $G$, and often the equality $Aut(G\circ H) = Aut(H)\wr Aut(G)$ holds. 
The classical results by Sabidussi \cite{sa1,sa2} and Hemminger \cite{he1,he2,he3} give the necessary conditions for the equality above to hold, or using Hemminger words, for the above product of graphs to have no ``unnatural'' automorphisms. This was  generalized by Hahn \cite{ha1,ha2} for directed graphs and hypergraphs, and by Dobson and Morris \cite{DM} for colored graphs. As a matter of fact, colored graphs seem a more adequate setting for this kind of problems (see e.g., \cite{gre1,GK1,sib}). This is due to the fact that
the fundamental construction connected with the question whether a permutation group $A$ is the automorphism group of a graph is
the colored graph $G^*(A)$ defined by orbitals of the group. Note that automorphisms of colored graphs are also considered in the literature in terms of \emph{permutations preserving factorizations} of complete graphs (see e.g., \cite{GLPP,LLP}).   

In general, necessary conditions for $Aut(G\circ H) = Aut(H)\wr Aut(G)$ are technical and not easy to state, but in case of finite graphs and digraphs they are quite simple. In particular,  for finite colored graphs $H_1$ and $H_2$,  $Aut(H_1\circ H_2) =$ \hbox{$Aut(H_2)\wr Aut(H_1)$} if and only if, for every color $k$, the following implication holds: if $H_1$ has a pair of $k$-twins (two vertices joined by an edge of color $k$ that have exactly the same neighbors in every color), then the $k$-complement of $H_2$ is connected. Even if the condition is not satisfied one can construct a graph $G$ such that $Aut(G) = Aut(H_2)\wr Aut(H_1)$. In \cite{je}, we have proved\footnote{The results of \cite{ha1,je} have been rediscovered in \cite{DM} without citing.} that if $A,B$ are automorphism groups of some $r$-colored graphs, than there exists an $r$-colored graph $G$ with $Aut(G) = A\wr B$. 

In this paper we consider the converse of these results. Suppose that the automorphism group of a graph $G$ has the form of the wreath product $A\wr B$ of two permutation groups $A$ and $B$. Does it mean that $A$ and $B$ are the automorphism groups of some graphs $H_1$ and $H_2$, and if so, can $G$ be obtained as the composition of $H_1$ and $H_2$?

We show that this is true for vertex transitive colored graphs and digraphs, but for intransitive graphs and digraphs the situation is more complicated and involves a generalization of composition of graphs. All these results concern the wreath product in its \emph{imprimitive action}. In the second part of the paper we consider the same problem for the wreath product in the \emph{product action}. Here the situation turns out to be more complex and more intriguing. Partial characterizations we obtain involve the automorphism groups of colored hypergraphs. The open question we state is strictly connected with the question of characterizing permutation groups that cannot be represented as the automorphism group of a colored hypergraph. Are there such groups other than the alternating groups and a few know exceptional examples? We address this question in more detail at the end of the paper.


\section{Definitions and basic facts}\label{sec1}
Our terminology on graphs and permutation groups is standard. In this section we recall the most important definitions, fix notation, and remind some basic facts.

By a \emph{colored graph} $G$ (or more precisely, \emph{$k$-colored graph}), we mean a pair $G = (V,E)$, where $V$ is the set of vertices of $G$, and $E$ the \emph{color function} from the set $P_2(V)$ of unordered pairs of the set vertices $V$ into the set of colors $\{ 0, \ldots, k-1\}$. Thus, $G$ is a complete graph with colored edges. It can be also viewed as an arbitrary colored graph where color $0$ stands for non-edges. 

An automorphism of a colored graph $G=(V,E)$ is a permutation $\sigma$ of the set $V$
preserving the edge function: $E\{ v, w\} = E \{ v\sigma, w\sigma \}$, for all $v,w \in V$. 
(Note that, in this paper, we adopt the convention to write permutations on the right. Also, for visibility, we omit a pair of parentheses in denoting the color function, and write $E\{v,w\}$ rather than  $E(\{v,w\})$). 
The group of automorphisms of $G$ will be denoted by
$Aut(G)$, and considered as a permutation group $(Aut(G),V)$ acting on the set of the vertices $V$. 
Notions and notations for directed graphs are analogous. 

Permutation groups are treated up to permutation isomorphism. Generally, a
permutation group $A$ on a set $V$ is denoted $(A,V)$ or just $A$, if the
set $V$ is clear from the context. It is always assumed that for a permutation group the cardinality $|V|>1$. Similarly, colored graphs are treated up to color isomorphism (two colored graphs are color isomorphic if they are isomorphic after suitable renaming colors of the edges). 

We will consider the class $GR$ of all permutation groups that are automorphism groups of colored graphs, and the class $DGR$ of all those permutation groups that are automorphism groups of colored directed graphs.

For finite graphs, these classes have been introduced, in fact, by H. Wielandt in \cite{wie},
where permutation groups that are automorphism groups of (systems of relations corresponding to) colored digraphs
were called $2$-closed, and those that are automorphism groups of colored graphs where referred to as $2^*$-closed.

In~\cite{je}, it was proved, in particular, that if the permutation groups $A$ and $B$ belong both to $GR$, then the wreath product $A \wr B$ of this groups also belongs to $GR$. The result holds for the wreath product both in the imprimitive action and the product action. 
It has been proved for finite graphs in a more detailed setting taking into account the number of colors involved. But for the general version formulated above the proof easily generalizes, also for the corresponding versions for the classes $DGR$. For the case of imprimitive action, these facts were proved independently in \cite{DM}. In this paper we formulate and prove the converse. 

Given a permutation group $(A,V)$, we shall consider the natural actions of $A$
a on the sets of ordered and unordered pairs of $V$, denoted $V\times V$ and $P_2(V)$, respectively. 
The orbits of $A$ in the action on $V\times V$ are called \emph{orbitals} of $A$, or more precisely, $2$-\emph{orbitals}. The $2$-orbitals consisting of pairs of the form $(v,v)$ are called trivial.  
For two orbitals $O_1, O_2$ we say that $O_1$ is \emph{paired}  with
$O_2$ if and only if $O_2 = \{(w,v):  (v,w) \in O_1\}$.
We call an orbital $O$ \emph{self-paired} if it is paired with itself. 
Moreover, we say that a permutation $\alpha$ \emph{transposes} $O_1$ and $O_2$, if $O_1\alpha
= O_2$.

The orbits of $A$ in the action on $P_2(V)$ will be called here \emph{$2^*$-orbitals}. 
Note that we can think of a $2^*$-orbital as a pair of nontrivial paired orbitals (or a nontrivial self-paired orbital). 

Let $O_0, \ldots O_{k-1}$ be all the $2^{*}$-orbitals of a permutation group $(A,V)$. We define a colored graph $G^*(A) =(V,E)$, where 
$$
E\{ v,w \}= i \textrm{\,\, if and only if the edge } \{ v,w\} \textrm{ belongs to the orbit } O_i.
$$
This graph will be called the (\emph{colored}) \emph{orbital graph} of $A$.
We observe that for the classes of permutation groups on a set $V$ and classes of colored graphs on $V$ (considered up to suitable isomorphisms), the operators $G^*$  and $Aut$ form a (monotone) \emph{Galois connection}. In particular, the composition of this two operations yields a closure operator for permutation groups, which we denote $\clo{A}=Aut(G^*(A))$. 

Thus, we have $ A \subseteq \clo{A}$, and  $A\subseteq B$ if and only if  $\clo{A} \subseteq \clo{B}$. Moreover,  $A \in GR$ if and only if $A = \clo{A}$ 
(see \cite{gre1}, where theses facts are proved without referring to Galois connection). 

The dual closure operator for colored graphs is $\clo{G} = G^*(Aut(G))$. Here the corresponding order relation is the \emph{subcoloring}. A colored graph $H$ is a \emph{subcoloring} of $G$ if $H$ can be obtained from $G$ by partitioning some colors into larger number of new colors. We write then $H \preceq G$. Here, we have $ \clo{G} \preceq G$ (unlike the previous case), and  $H\preceq G$ if and only if  $\clo{H} \preceq \clo{G}$. Moreover,  if $A = Aut(G)$, then, $G^*(A) \preceq G$. 

Replacing $2^{*}$-orbitals by $2$-orbitals we define analogously the colored \emph{orbital digraph} $G(A)$. Then operators $Aut$ and $G(\dots)$ form a Galois connection between classes of permutation groups on a set $V$ and classes of colored digraphs on $V$. We get another closure operator $\cld{A} = Aut(G(A))$, with the same mentioned properties.  In addition, we have 
$A\subseteq \cld{A} \subseteq \clo{A}$. Also, every digraph $D$ that has the automorphism group $A$ is a subcoloring of $G(A)$.

By $S_V$ we denote the group of
all permutations on the set $V$. If $V$ is a set consisting of $n$ elements, then we write also $S_n$ for $S_V$. By $I_n$ we denote the least subgroup of $S_n$, i.e., that consisting of the identity permutation only. 
The identity permutation is denoted generally by $id$, if no confusion can arise.

Given permutation groups $A, B$ acting on sets $V$ and $W$, respectively, by
the {\it direct product} $A \times B$  we mean the permutation group acting on the Cartesian product $V \times W$ consisting of all permutations  $\gamma =(\alpha,\beta)$ with $\alpha \in A$,
$\beta \in B$, such that 
$$(v,w)(\alpha,\beta) = (v\alpha,w\beta)$$ 
for every $(v,w) \in V\times W$. 

A special role in this paper plays the permutation group $I_n \times A = A \times I_n$ (up to permutation isomorphism). This group acts just as $(A,V)$, but the action is on $n$ copies of $V$ and is done in a parallel way. Therefore, it is called a \emph{parallel multiple} of $A$ (note that in \cite{kis1}, and some other papers, it is called the \emph{parallel power}).

\section{The imprimitive action of the wreath product}\label{imp}

In this section, we study the wreath product of permutation groups in its natural imprimitive action. 
For permutation groups $(A, V)$ and  $(B,W)$, by the (imprimitive) \emph{wreath} product $A$ and $B$, denoted $A \wr B$, we mean a permutation group acting on $V \times W$, consisting of all permutations $\gamma$ for which there exist permutations $\alpha_w \in A$ for each $w\in W$ and permutation $\beta\in B$ such that 
\begin{equation} 
 (v,w)\gamma = (v\alpha_w,w\beta) 
\end{equation}
for every $(v,w)\in V\times W$.

Abstractly, the wreath product is a semidirect product of $A^{B}$ and $B$; for this fact and more general definition see \cite{cam}. In this paper, we define $A \wr B$ merely as a permutation group (a set of permutations) abstracting from its action on $V\times W$, because our problem does not depend on this action. This approach makes the definition easier and simplifies the proofs.

Following Cameron~\cite{cam}, we may think of the wreath product $A \wr B$ as a fibre bundle over the set $W$. In pictures (like the left hand side of Figure~1), fibres are presented as parallel vertical lines over the horizontal line representing the set $W$. Each fibre is a copy of $V$ with points permuted by elements of $A$, independently in each fibre, and whole fibres are permuted using permutations of $B$. Thus fibres form natural blocks of $A \wr B$.

The corresponding construction for graphs is the \emph{composition} of graphs. For two colored graphs, the composition $G \circ H$ of graphs $G$ and $H$ with disjoint vertex sets $W$ and $V$ is the graph with the vertex set $W\times V$ and the edges colored as follows:
the edge from  $\{w_1,v_1\}$ to $\{w_2,v_2\}$ has color $c$, if the edge $\{w_1,w_2\}$ forms an edge in $G$ of color $c$ or $w_1=w_2$ and the edge $\{v_1,v_2\}$ has color $c$ in $H$. (For directed graphs the definition is the same). 

If $G$ and $H$ are \emph{color-disjoint} (colored with disjoint sets of colors), then for the automorphism groups, considered as permutation groups, we have $Aut(G \circ H) = Aut(H) \wr Aut(G)$ (note the reverse order in the notation). Most often this equality holds also if $G$ and $H$ are not color-disjoint (see~\cite{DM}). Therefore, in cite~\cite{ara,DM}, this construction is called the \emph{wreath product of graphs}. 

The reader should be warned that in many papers, as  in~\cite{DM}, the opposite order of components in notation of the wreath product of groups is used  (which may be a source of misunderstanding). We follow the tradition applied in Cameron~\cite{cam}.

In order to describe the structure of the graph $G^*(A\wr B)$ we introduce the free composition of colored graphs. \medskip

\noindent \textbf{Definition.} Let $H=(V,E_1)$ and $G=(W,E_2)$ be $r$-colored and $s$-colored graphs, respectively. Let $O_1,\dots,O_t$ be the orbits of $Aut(H)$ and $Q_1,\dots,Q_u$ the orbits of $Aut(G)$. The \emph{free composition} of $G$ and $H$, denoted $G \circ_{f} H $, is the colored graph $(W\times V, E)$ with the color function $E$ is defined as follows:
$$ 
E\{(w_1,v_1),(w_2,v_2)\}= \left\{ 
\begin{array}{ll}
(i,j,d)  & \textrm{if } 
w_1 \neq w_2,  E_2\{w_1,w_2\}=d, v_1 \in O_{i} \textrm{ and} \\ & v_2 \in O_{j}, \\
(k,c)  &  \textrm{if } w_1=w_2,   E_1\{v_1,v_2\}=c,  \textrm{ and } w \in Q_{k}.
\end{array} 
\right.
$$ \medskip

Here, the colors of $H\circ_{f}G$ are labelled by the pairs
$(k,c)$ and the triples $(i,j,d)$, with $k\leq r$, $i,j\leq s$, and $d$ and $c$ being colors of $G$ and $H$, respectively. More formally, they should be replaced by suitable numbers from $0$ to $r^2u+st-1$.   (The definition for colored directed digraphs is analogous). 

The situation is especially clear under assumption
that $G$ and $H$ are color-disjoint. In this case, if $Aut(G)$ and $Aut(H)$ are transitive, then $G\circ_f H$ is simply the composition $G\circ H$ of graphs. If $Aut(H)$ is transitive, and $Aut(G)$ intransitive, then $G\circ_f H$ is what is called the $C$-join in \cite{DM} (or $X$-join in \cite{he3}). 

It is also easy to describe the graph $G^*(A\wr B)$ in terms of the free composition of colored graphs. Looking at the orbitals in $A\wr B$ we see easily that if groups $A$ and $B$ are transitive, then the graph $G=G^*(A \wr B)$ is just the composition of  (color-disjoint) $G^*(B)$ and $G^*(A)$. In general, the edges joining points in the fibre of $A\wr B$ (called \emph{vertical edges}) form the graph $G^*(A)$, while the remaining edges (\emph{non-vertical edges}) are determined by $G^*(B)$ according to the following. 

\begin{Proposition}\label{p:1}
$G^*(A\wr B) = G^*(B) \circ_f 
G^*(A)$
\end{Proposition}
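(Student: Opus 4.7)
The plan is to verify that the color partitions of the two sides—the partitions of unordered pairs over $V\times W$ into classes of equal color—coincide, which in view of the paper's convention of treating colors up to renaming is exactly the asserted equality. The fibres $V\times\{w\}$ form a system of blocks for $A\wr B$, so every element of $A\wr B$ preserves the split of pairs into \emph{vertical} ones (both endpoints in a common fibre) and \emph{non-vertical} ones, and the defining case analysis of $\circ_f$ makes exactly the same split. Accordingly I would check the matching of colors in the two cases separately.

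For vertical pairs, applying $\gamma=((\alpha_w)_{w\in W},\beta)$ to $\{(v_1,w),(v_2,w)\}$ produces $\{(v_1\alpha_w,w\beta),(v_2\alpha_w,w\beta)\}$, lying in the fibre over $w\beta$. Since $\alpha_w$ ranges freely over $A$ and $\beta$ freely over $B$, two vertical pairs belong to the same $2^*$-orbital of $A\wr B$ if and only if the bases $w,w'$ lie in a common orbit of $B$ on $W$ (equivalently, of $\clo{B}=Aut(G^*(B))$) and the pairs $\{v_1,v_2\}, \{v_1',v_2'\}$ lie in a common $2^*$-orbital of $A$, i.e.\ receive the same color in $G^*(A)$; this is exactly the information packaged in the label $(k,c)$ from the definition of $\circ_f$. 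For non-vertical pairs $\{(v_1,w_1),(v_2,w_2)\}$ the image under $\gamma$ is $\{(v_1\alpha_{w_1},w_1\beta),(v_2\alpha_{w_2},w_2\beta)\}$, with $\alpha_{w_1},\alpha_{w_2}$ chosen independently in $A$ and $\beta$ in $B$; hence the $2^*$-orbital is controlled by the (unordered) pair of $\clo{A}$-orbits $O_i,O_j$ containing $v_1,v_2$ and by the $B$-orbital of $(w_1,w_2)$, up to the simultaneous swap forced by the pair being unordered. This is exactly the content of the label $(i,j,d)$ with $d=E_2\{w_1,w_2\}$.

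The delicate step is the non-vertical case where $v_1$ and $v_2$ fall in different $A$-orbits: there the direction of $(w_1,w_2)$ can no longer be reversed by permuting the $v$-coordinates, so one must reconcile this with the fact that $G^*(B)$ records only the (undirected) $2^*$-orbital of $\{w_1,w_2\}$. I would resolve this by invoking the basic closure properties recalled earlier—that $A$ and $\clo{A}$ (respectively $B$ and $\clo{B}$) have the same $2^*$-orbitals, and that the orbit structure on $W$ of $B$ agrees with that of $\clo{B}$—so that the label $(i,j,d)$ still separates precisely the $2^*$-orbitals of $A\wr B$ in this asymmetric subcase. Combining the two cases yields the required equality of color partitions, and hence $G^*(A\wr B)=G^*(B)\circ_f G^*(A)$.
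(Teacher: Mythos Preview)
Your case split and the treatment of vertical edges are sound and match the paper's (very brief) indication. The delicate step you flag in the non-vertical case is a real obstacle, but the resolution you propose does not work: the closure identities (that $A$ and $\clo A$ share $2^*$-orbitals, that $B$ and $\clo B$ share orbits) say nothing about the mismatch you yourself describe, namely that when $v_1\in O_i$ and $v_2\in O_j$ with $i\neq j$ the $2^*$-orbital of $\{(v_1,w_1),(v_2,w_2)\}$ in $A\wr B$ is governed by the \emph{directed} $2$-orbital of $(w_1,w_2)$ under $B$, whereas the label $(i,j,d)$ in $G^*(B)\circ_f G^*(A)$ records only the undirected color $d=E_2\{w_1,w_2\}$ (and, to be well-defined on an unordered pair, must satisfy $(i,j,d)=(j,i,d)$).

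A concrete failure: let $A$ be any intransitive group (other than $I_2$) with distinct orbits $O_1\ni a$ and $O_2\ni b$, and let $B=C_3$ act cyclically on $\{1,2,3\}$. Since $C_3$ has a single $2^*$-orbital, every non-vertical edge with one end in $O_1$ and one in $O_2$ receives the same label in $G^*(B)\circ_f G^*(A)$. But $\{(a,1),(b,2)\}$ and $\{(a,2),(b,1)\}$ lie in distinct $2^*$-orbitals of $A\wr C_3$: sending $(a,1)\mapsto(a,2)$ and $(b,2)\mapsto(b,1)$ would force $\beta\in C_3$ to swap $1$ and $2$, while sending $(a,1)\mapsto(b,1)$ is blocked since $a,b$ lie in different $A$-orbits. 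So the label does \emph{not} separate these $2^*$-orbitals, and no appeal to closure can repair this. The equality as written therefore needs an extra hypothesis---$A$ transitive, or every $2$-orbital of $B$ self-paired---both of which are in force in the transitive setting of Theorem~\ref{th:1}, the one place the proposition is actually used; for intransitive $A$ in general one would need $d$ to come from the directed orbital graph $G(B)$ rather than from $G^*(B)$.
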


The construction is presented schematically at the left hand side of Figure~1. After these remarks we are ready to prove the following.

\begin{Theorem}\label{th:1}
If an $r$-colored graph $G$ is vertex transitive, and $Aut(G) = A\wr B$, for some permutation groups $A$ and $B$, then there exist $r$-colored graphs $H_1$ and $H_2$ such that $Aut(H_1)=A$ and $Aut(H_2)=B$, and $G=H_2 \circ H_1$.
\end{Theorem}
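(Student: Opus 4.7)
The plan is to construct $H_1$ and $H_2$ directly from the fibre structure of $G$ and verify their automorphism groups by lifting permutations. The key preliminary observation is that because $G$ is vertex transitive, the group $A \wr B$ acts transitively on $V \times W$; inspecting the wreath-product formula $(v,w)\gamma = (v\alpha_w, w\beta)$ then forces $A$ to be transitive on $V$ and $B$ transitive on $W$. These two transitivities underlie the constructions below.

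Fix a base block $w_0 \in W$ and let $H_1$ be the restriction of $G$ to the fibre over $w_0$, identified with $V$. For any other fibre over $w$, the wreath-product element $(id, \beta)$ with $w_0 \beta = w$ lies in $A \wr B = Aut(G)$ and carries the restriction of $G$ over $w_0$ onto the restriction over $w$, so every fibre of $G$ carries the same $H_1$-coloring. Define $H_2$ on $W$ by declaring the color of $\{w_1,w_2\}$ to be the $G$-color of any edge $\{(w_1,v_1),(w_2,v_2)\}$. This is well-defined: given any alternative choice $v_1', v_2'$, the transitivity of $A$ provides $\alpha, \alpha' \in A$ sending $v_1 \mapsto v_1'$ and $v_2 \mapsto v_2'$, and the wreath-product element with $\alpha_{w_1} = \alpha$, $\alpha_{w_2} = \alpha'$, all other $\alpha_w = id$, and trivial block permutation lies in $Aut(G)$ and maps one edge to the other. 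From these descriptions one reads off $G = H_2 \circ H_1$, and both $H_i$ are $r$-colored since they inherit colors from $G$.

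It remains to identify $Aut(H_1)$ with $A$ and $Aut(H_2)$ with $B$. The inclusions $A \subseteq Aut(H_1)$ and $B \subseteq Aut(H_2)$ are immediate from the construction, since any $\alpha \in A$ (respectively $\beta \in B$) is realized by an obvious element of $A \wr B = Aut(G)$ whose restriction to fibre $w_0$ (respectively to the block set $W$) acts as $\alpha$ (respectively $\beta$). For the reverse inclusions I would lift a candidate $\tau \in Aut(H_1)$ to the permutation $\gamma$ of $V \times W$ acting as $\tau$ on fibre $w_0$ and trivially elsewhere; a short case check on within- and between-fibre edges (using the independence of between-fibre $G$-colors on the in-fibre coordinate) shows $\gamma \in Aut(G) = A \wr B$, and the uniqueness of the wreath-product decomposition then gives $\tau \in A$. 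The argument for $Aut(H_2) \subseteq B$ is symmetric, lifting $\sigma \in Aut(H_2)$ to the global fibre permutation $(id, \sigma)$. I do not anticipate any serious obstacle; the entire proof is a bookkeeping exercise once well-definedness is established via transitivity. Proposition~\ref{p:1} is not strictly required for the construction, though it provides a useful cross-check that the fibre partition of $G$ is canonically determined by $Aut(G)$.
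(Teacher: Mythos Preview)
Your proof is correct, but it takes a different route from the paper's. The paper argues abstractly via the Galois connection: since $Aut(G)=A\wr B$, the graph $G$ is a subcoloring of $G^*(A\wr B)$, which by Proposition~\ref{p:1} and transitivity equals $G^*(B)\circ G^*(A)$; hence $G=H_2\circ H_1$ with $H_i$ subcolorings of $G^*(B)$, $G^*(A)$. Then a single chain of inclusions
\[
A\wr B \;=\; Aut(G) \;\supseteq\; Aut(H_1)\wr Aut(H_2) \;\supseteq\; \clo{A}\wr\clo{B} \;\supseteq\; A\wr B
\]
forces $Aut(H_1)=A=\clo{A}$ and $Aut(H_2)=B=\clo{B}$ at once. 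Your argument is instead fully explicit: you build $H_1,H_2$ by hand from the fibre structure, check well-definedness using transitivity of $A$, and then obtain the reverse inclusions $Aut(H_1)\subseteq A$ and $Aut(H_2)\subseteq B$ by lifting a putative extra automorphism to a global automorphism of $G$ and reading off its wreath coordinates. The paper's approach is shorter once the closure machinery is in place and yields the bonus conclusion $A=\clo A$, $B=\clo B$ (i.e.\ $A,B\in GR$) for free; your approach is more self-contained, does not rely on Proposition~\ref{p:1} or the closure formalism, and makes the role of transitivity completely transparent.
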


\begin{proof}
Since $Aut(G) = A\wr B$, graph $G$ is an $r$-colored subcoloring of $G^*(A\wr B)$. Since $Aut(G)$ is transitive, both $A$ and $B$ are transitive.  Consequently, $G^*(A\wr B) = G^*(B) \circ 
G^*(A)$. 

It follows that $G = H_2 \circ H_1$, where $H_2, H_1$ are $r$-colored subcolorings of $G^*(B)$ and $G^*(A)$, respectively. 
This implies that $Aut(H_2) \supseteq Aut(G^*(B)) = \clo{B}$, and similarly,  $Aut(H_1) \supseteq \clo{A}$.

Thus, we have  $A \wr B = Aut(G) \supseteq Aut(H_1) \wr Aut(H_2) \supseteq \clo{A} \wr \clo{B}$. It follows that $A=Aut(H_1) =\clo{A}$ and $B=Aut(H_2) =\clo{B}$,  proving the result.
\end{proof}

The analogous results hold for colored digraphs, as well. If $G$ is not vertex transitive (which means that at least one of $A$ or $B$ is not transitive), then the situation is more complex. First of all, if $Aut(G) = A\wr B$, then, as we shall see, $A$
and $B$ may not belong to $GR$ at all. Yet, even if $A,B\in GR$, then although $G$ is a subcoloring of $G^*(A\wr B)$, it needs not to be the compositions of two suitable graphs, since now one can use edges between points with coordinates in different orbits to block unnecessary automorphisms. In fact, this possibility is used in the proof of Theorem~3.1 in \cite{je}.  
A part of this theorem may be generalized as follows.

\begin{Theorem}\label{reprezentowalne}
Given two permutation groups $A$ and $B$, the following hold

\begin{enumerate}
\item If $A, B \in GR$, then $A \wr B \in GR$. 
\item If $A, B \in DGR$, then $A \wr B \in DGR$.
\end{enumerate}
\end{Theorem}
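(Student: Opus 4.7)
The plan is to use the free composition construction introduced just before Proposition~\ref{p:1}, which in effect already provides the right graph. Given $A, B \in GR$, one has colored graphs $H_1, H_2$ with $Aut(H_1) = A$ and $Aut(H_2) = B$, and by Proposition~\ref{p:1} the orbital graph $G^*(A\wr B) = G^*(B) \circ_f G^*(A)$. So the whole problem reduces to the following general fact: for any colored graphs $H_1$ on $V$ and $H_2$ on $W$,
\begin{equation*}
Aut(H_2 \circ_f H_1) = Aut(H_1) \wr Aut(H_2).
\end{equation*}
Once this is established, applying it to $H_1 = G^*(A)$ and $H_2 = G^*(B)$ gives $\clo{A\wr B} = Aut(G^*(A\wr B)) = \clo{A} \wr \clo{B} = A \wr B$, using $A = \clo{A}$ and $B = \clo{B}$ from the assumption $A, B \in GR$. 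This yields $A\wr B \in GR$ as required. The digraph case is completely analogous with $G^*(\cdot)$ replaced by $G(\cdot)$ and the free composition defined on ordered pairs of orbits.

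The inclusion $\supseteq$ in the key fact is direct verification. An element $\gamma = ((\alpha_w)_{w\in W}, \beta) \in Aut(H_1) \wr Aut(H_2)$ sends a vertical edge of color $(k,c)$ to a vertical edge of the same color, because $\beta$ preserves the $Aut(H_2)$-orbit $Q_k$ and $\alpha_w$ preserves $E_1$; it sends a non-vertical edge of color $(i,j,d)$ to one of the same color, because each $\alpha_{w_\ell}$ preserves the $Aut(H_1)$-orbits $O_i, O_j$ and $\beta$ preserves $E_2$.

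The inclusion $\subseteq$ is the main content. Let $\sigma \in Aut(H_2 \circ_f H_1)$. The crucial observation is that vertical colors $(k,c)$ and non-vertical colors $(i,j,d)$ are tuples of different arity, hence lie in disjoint color sets; thus $\sigma$ preserves the relation ``joined by a vertical edge,'' whose equivalence classes are exactly the fibres. Therefore $\sigma$ maps fibres onto fibres and induces a permutation $\beta$ of $W$. Since the third coordinate $d$ of a non-vertical color equals $E_2\{w_1, w_2\}$ for any representative pair across the fibres, $\sigma$-invariance of non-vertical colors forces $\beta \in Aut(H_2)$. Within each fibre, $\sigma$ restricts to a bijection onto another fibre; identifying both copies of $V$ canonically produces a permutation $\alpha_w$ of $V$, and preservation of the second coordinate $c$ in vertical colors $(k,c)$ forces $\alpha_w \in Aut(H_1)$. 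Hence $\sigma = ((\alpha_w)_{w\in W}, \beta) \in Aut(H_1) \wr Aut(H_2)$.

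The step I expect to be the main obstacle, or at least the point requiring care, is the recovery of the fibre partition from the coloring. This is exactly what the free composition is designed to ensure: the disjointness of the vertical and non-vertical color namespaces is automatic from the tuple arities, and the additional orbit labels $(i,j,d)$ on non-vertical edges rule out the ``unnatural'' cross-fibre automorphisms that would otherwise appear in the intransitive case (where the bare composition $H_2 \circ H_1$ might fail the Sabidussi--Hemminger conditions). No extra combinatorial argument is needed, since the coloring itself records the orbit structure of $A$; the rest of the proof is bookkeeping.
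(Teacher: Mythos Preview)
Your proof is correct. The paper does not actually prove this theorem: it simply cites \cite{je} (where a finite, color-counting version is established) and remarks that the argument there extends. You instead derive the result internally from the paper's own machinery, namely Proposition~\ref{p:1} together with the general identity $Aut(H_2 \circ_f H_1) = Aut(H_1)\wr Aut(H_2)$, which you verify directly. This is exactly what the free composition is engineered to do---the orbit labels $(i,j)$ in non-vertical colors are precisely the ``edges between points with coordinates in different orbits'' that the paper mentions are used in \cite{je} to block unnatural automorphisms in the intransitive case---so your route is the natural one within the paper's framework and is more self-contained than the paper's deferral to an external reference. The fibre-recovery step, which you flag as the delicate point, is indeed secured by the disjointness of the two color namespaces, and your extraction of $\beta\in Aut(H_2)$ from the $d$-coordinate and of $\alpha_w\in Aut(H_1)$ from the $c$-coordinate is sound.
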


As we have already pointed out earlier, in \cite{je}, the above was proved only for finite graphs and in more detailed form involving the numbers of colors. Yet, if we ignore calculations concerning the numbers of colors, then one may obtain easily the proofs of the above statements, working also for infinite graphs and digraphs. We will need also the following simple observation from \cite{gre1}:

\begin{Lemma}\label{l:orbits}
If $A\neq I_2$, and a permutation $\alpha$ preserves the $2^*$-orbitals of $A$, then $\alpha$ preserves the orbits of $A$
\end{Lemma}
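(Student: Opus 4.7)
The plan is to fix an arbitrary vertex $v \in V$ and show that $v\alpha$ lies in the same $A$-orbit as $v$. The argument naturally splits according to whether the orbit of $v$ has size at least two or $v$ is a fixed point of $A$.

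In the first case, choose any $w \neq v$ in the orbit of $v$. The $2^*$-orbital $\Omega$ containing $\{v, w\}$ consists of all pairs $\{v\beta, w\beta\}$ with $\beta \in A$, and since $v$ and $w$ lie in the same $A$-orbit, every such pair is a subset of that orbit. The hypothesis that $\alpha$ preserves $2^*$-orbitals gives $\{v\alpha, w\alpha\} \in \Omega$, hence $v\alpha$ lies in the orbit of $v$.

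The delicate case is when $v$ is a fixed point of $A$. For any $u \neq v$ the $2^*$-orbital of $\{v, u\}$ is precisely the set of pairs $\{v, u'\}$ with $u'$ in the orbit of $u$. Therefore $\{v\alpha, u\alpha\}$ having this form leaves only two possibilities: either $v\alpha = v$ (which is what we want), or $u\alpha = v$ and $v\alpha$ lies in the orbit of $u$. If $v\alpha \neq v$, then the second possibility must occur for every $u \neq v$, and by injectivity of $\alpha$ this forces $V$ to contain only one vertex other than $v$, i.e.\ $|V| = 2$. But on a two-element set any permutation group fixing a point must be trivial, so this gives $A = I_2$, contradicting the hypothesis.

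The main obstacle lies precisely in the fixed-point case: the symmetry of unordered pairs permits $\alpha$ to swap the roles of $v$ and $u$, and it is the assumption $A \neq I_2$ that ultimately excludes such a swap being consistent across all pairs. The transitive-orbit case is essentially automatic because within a nontrivial orbit the $2^*$-orbital structure already confines images to that orbit.
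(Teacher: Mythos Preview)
Your proof is correct and follows essentially the same idea as the paper. The paper's one-line argument (``edges with endvertices in a nontrivial orbit form a union of $2^*$-orbitals'') is exactly your Case~1, and the paper defers the remaining details to \cite{gre1}; your Case~2 fills in the fixed-point case explicitly and shows precisely where the hypothesis $A\neq I_2$ is needed.
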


It follows easily from the fact that edges with endvertices in a nontrivial orbit form a union of $2^*$-orbitals (for details see~\cite[Lemma~2.2]{gre1}).

In the rest of this section, we consider the wreath products of the permutation groups in which at least one of the components does not belong to the class $GR$ (or $DGR$). The problem is more difficult for undirected graphs, so we focus on undirected graphs.

In the lemmas below we assume generally that $A$ and $B$ are permutation groups acting on sets $V$ and $W$, respectively.

\begin{Lemma}\label{B}
If $A \not\in GR \cup \{ I_2\}$, then, $A \wr B \not\in GR$. 
\end{Lemma}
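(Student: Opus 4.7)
The plan is to show $A\wr B \ne \overline{A\wr B}$, which by the Galois-closure discussion of Section~1 is equivalent to $A\wr B\notin GR$. The strategy is to lift a single ``extra'' automorphism from $\bar A\setminus A$ to an automorphism of the whole orbital graph $G^*(A\wr B)$ that acts on one fibre only, and to observe that it cannot lie in $A\wr B$.

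Concretely, since $A\notin GR$, I would fix any $\sigma\in\bar A\setminus A$. Because $A\ne I_2$, Lemma~\ref{l:orbits} applies and $\sigma$ preserves every orbit of $A$ on $V$. Next I would pick an arbitrary $w_0\in W$ and define $\hat\sigma$ to be the permutation of $V\times W$ that acts as $\sigma$ on the fibre $V\times\{w_0\}$ and as the identity on every other fibre. In the notation of equation~(1), any wreath-product decomposition of $\hat\sigma$ must take $\alpha_{w_0}=\sigma\notin A$, so $\hat\sigma\notin A\wr B$ immediately.

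It then remains to check $\hat\sigma\in Aut(G^*(A\wr B))$. Here I would invoke Proposition~\ref{p:1} to replace $G^*(A\wr B)$ with $G^*(B)\circ_f G^*(A)$ and inspect colors by cases. Vertical edges inside fibre $w_0$ are colored according to $G^*(A)$ and are preserved because $\sigma\in\bar A=Aut(G^*(A))$; vertical edges in the other fibres are fixed by the identity. A non-vertical edge $\{(v_1,w_0),(v_2,w_1)\}$ has color $(i,j,d)$, where $i,j$ are the $A$-orbits of $v_1,v_2$ and $d$ is the $G^*(B)$-color of $\{w_0,w_1\}$; under $\hat\sigma$ it becomes $\{(v_1\sigma,w_0),(v_2,w_1)\}$, and since $\sigma$ preserves $A$-orbits while $\hat\sigma$ leaves all $W$-coordinates alone, the triple $(i,j,d)$ is unchanged. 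Non-vertical edges disjoint from fibre $w_0$ are fixed outright. This will give $\hat\sigma\in\overline{A\wr B}\setminus(A\wr B)$, as required.

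The one delicate step, and exactly the reason $I_2$ must be excluded in the hypothesis, is the appeal to Lemma~\ref{l:orbits}: for $A=I_2$ the closure $\bar A=S_2$ contains a transposition that swaps the two singleton $A$-orbits, in which case the orbit labels $i,j$ appearing in the non-vertical colors of $G^*(A\wr B)$ would not be $\hat\sigma$-invariant and the whole construction would collapse.
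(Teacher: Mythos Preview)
Your proposal is correct and follows essentially the same approach as the paper: lift a permutation $\sigma\in\clo{A}\setminus A$ to act on a single fibre, use Lemma~\ref{l:orbits} (with the hypothesis $A\neq I_2$) to guarantee that $\sigma$ preserves $A$-orbits, and verify that the resulting permutation preserves all colors of $G^*(A\wr B)$ but lies outside $A\wr B$. The only cosmetic difference is that for the non-vertical edges you read the color $(i,j,d)$ directly from Proposition~\ref{p:1}, whereas the paper instead produces an explicit $\alpha'\in A$ with $v\alpha'=v\sigma$ and argues that the corresponding $\phi'\in A\wr B$ moves the edge the same way; the two formulations are equivalent.
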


\begin{proof}
Let $G=G^*(A \wr B)$. By the properties of the closure discussed in Section~\ref{sec1}, it is enough to show that there exists $\phi\in Aut(G)\setminus  (A \wr B)$.

Since $A \notin GR$, there is $\alpha \in \clo{A}\setminus A$. We use it to define a permutation $\phi$ on $V\times W$. For a fixed element $w_0\in W$ we put
$$(v,w)\phi=\left\{
\begin{array}{ccc}
(v\alpha,w_0) & \textrm{for} & w=w_0\\
 (v,w) & \textrm{for} & w\ne w_0
\end{array}\right.$$

We will show that $\phi$ preserves the colors in $G$. 
Let $e=\{x,y\}$ be an edge of $G$. 
If none of $x,y$ belongs to $V \times \{w_0\}$, then $\phi(e)=e$, and the claim is obvious. Also, if both $x$ and $y$ belong to $V \times \{w_0\}$, then the colors of $\phi(e)$ and $e$ are the same (since the colored graph spanned on $V \times \{V_0\}$ is a copy of $G^*(A)$ and $\alpha \in \clo{A}$).

Assume that $x \in V \times \{w_0\}$, $y \notin V \times \{w_0\}$. Then, $y\phi=y$, and for $x=(v,w_0)$, $x\phi=(v\alpha,w_0)$. By Lemma~\ref{l:orbits}, $v$ and $v\alpha$ belong to the same orbit of $A$. 
Consequently, there exists $\alpha' \in A$ such that $v\alpha' = v\alpha$. 
Define $\phi'$ as $\phi$ above with $\alpha$ replaced by $\alpha'$.
Then, $\phi' \in A \wr B$, and 
$x\phi' = x\phi$ and $y\phi' = y = y\phi$. Therefore  the edges $e$ and $e\phi=e\phi'$ have the same color, completing the proof.
\end{proof}

Now, we show that in the remaining cases, whether $A \wr B \in GR$ or not depends on the number $t$ of orbits of $A$ and on whether the parallel multiple $\row{B}{t} \in GR$ or not.

\begin{Lemma} \label{A}
If $t\geq 1$ is the number of orbits of $A$, and   $\row{B}{t} \not\in GR$ and $B\neq I_2$, then  $A \wr B \not\in GR$. 
\end{Lemma}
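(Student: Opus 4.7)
The plan is to use the hypothesis $\row{B}{t}\notin GR$ to extract a permutation $\tau$ of $W$ with $\tau\notin B$, and then to show that the map $\phi\colon V\times W\to V\times W$ defined by $\phi(v,w)=(v,\tau(w))$ belongs to $Aut(G^*(A\wr B))\setminus (A\wr B)$. This will show $A\wr B\ne \clo{A\wr B}$, i.e., $A\wr B\notin GR$.

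To produce $\tau$, I would pick $\beta^*\in \clo{\row{B}{t}}\setminus \row{B}{t}$. When $t=1$ this is immediate, since $\row{B}{1}=B$ and I may take $\tau=\beta^*\in \clo{B}\setminus B$. When $t\ge 2$ I would show that $\beta^*$ must have the ``diagonal'' form $\beta^*(i,w)=(i,\tau(w))$ for some $\tau\in \cld{B}\setminus B$. The key observations are: (i)~the within-slice $2^*$-orbitals of $\row{B}{t}$ (the orbitals consisting of pairs $\{(i,w_1),(i,w_2)\}$ with $i$ fixed) lie in pairwise distinct color classes for different values of $i$, so $\beta^*$ must fix each slice $\{i\}\times W$ setwise; (ii)~writing $\tau_i$ for the permutation of $W$ induced by $\beta^*$ on slice $i$, the $2^*$-orbital generated by a diagonal pair $\{(i_1,w),(i_2,w)\}$ consists only of further diagonal pairs $\{(i_1,w'),(i_2,w')\}$, which forces $\tau_{i_1}(w)=\tau_{i_2}(w)$ for every $w$, so the $\tau_i$ all coincide with a common $\tau$; and (iii)~preservation of the remaining between-slice orbitals (those with $w_1\ne w_2$) forces $\tau$ to preserve the $2$-orbitals of $B$, whence $\tau\in \cld{B}$. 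Since $\beta^*\notin \row{B}{t}=I_t\times B$, one has $\tau\notin B$.

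With $\tau$ in hand, I would verify that $\phi$ preserves every color of $G^*(A\wr B)$. A vertical edge $\{(u_1,w),(u_2,w)\}$ has color determined by the $2^*$-orbital of $\{u_1,u_2\}$ under $A$ and by the $B$-orbit of $w$; the first datum is untouched by $\phi$, while $\tau\in \clo{B}$ preserves $B$-orbits by Lemma~\ref{l:orbits} (using $B\ne I_2$). A non-vertical edge $\{(u_1,w_1),(u_2,w_2)\}$ with $w_1\ne w_2$ has color determined by the $A$-orbits of $u_1$ and $u_2$ together with either the $2^*$-orbital of $\{w_1,w_2\}$ under $B$ (when $u_1,u_2$ lie in the same $A$-orbit) or the $2$-orbital of $(w_1,w_2)$ under $B$ (when they lie in different $A$-orbits); since $\cld{B}\subseteq \clo{B}$, $\tau$ preserves the relevant structure in both subcases. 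Finally, $\phi\notin A\wr B$, because any element of $A\wr B$ that fixes the $V$-coordinate pointwise must be of the form $(id,\beta)$ with $\beta\in B$, and $\tau\notin B$.

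I expect the main obstacle to be the structural analysis of $\beta^*$ for $t\ge 2$, where one must carefully separate the different types of $2^*$-orbitals of $\row{B}{t}$ (within-slice versus between-slice, and, among the latter, the diagonal pairs versus those with $w_1\ne w_2$) and show that each type imposes exactly the right constraint to force the diagonal form $\beta^*(i,w)=(i,\tau(w))$ with $\tau\in \cld{B}$.
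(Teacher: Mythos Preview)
Your proposal is correct but follows a genuinely different route from the paper.

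The paper argues by contradiction: assuming $Aut(G^*(A\wr B))=A\wr B$, it builds a colored graph $G'$ on $W\times\{1,\ldots,t\}$ by collapsing each $A$-orbit in each fibre of $G^*(A\wr B)$ to a single vertex (non-vertical colors are inherited, vertical colors are chosen fresh), and then shows $Aut(G')=\row{B}{t}$, contradicting the hypothesis. You instead work directly: you analyze $\clo{\row{B}{t}}$ to show that for $t\ge 2$ any $\beta^*\in\clo{\row{B}{t}}\setminus\row{B}{t}$ is diagonal, $\beta^*(i,w)=(i,\tau(w))$ with $\tau\in\cld{B}\setminus B$ (for $t=1$ one simply takes $\tau\in\clo{B}\setminus B$), and then lift $\tau$ to $\phi(v,w)=(v,\tau(w))$ and verify, using the description $G^*(A\wr B)=G^*(B)\circ_f G^*(A)$, that $\phi\in\clo{A\wr B}\setminus(A\wr B)$.

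What each approach buys: the paper treats the hypothesis $\row{B}{t}\notin GR$ as a pure black box and defers the structural identification $\clo{\row{B}{t}}=I_t\times\cld{B}$ to Proposition~\ref{R}; your argument is more constructive (it exhibits the unwanted automorphism explicitly) and in effect proves the relevant direction of Proposition~\ref{R} along the way. Your split into $t=1$ versus $t\ge 2$ is natural and necessary in this route, since for transitive $A$ only $\tau\in\clo{B}$ is needed on non-vertical edges, while for $t\ge 2$ the edges with endpoints in distinct $A$-orbits force the sharper condition $\tau\in\cld{B}$. One small suggestion: in step~(i) the cleanest way to see that $\beta^*$ fixes each slice $\{i\}\times W$ setwise is simply to invoke Lemma~\ref{l:orbits} for $\row{B}{t}$ (which is not $I_2$ since $t|W|\ge 4$); this avoids any special pleading when $B$ has singleton orbits.
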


\begin{proof} 
Let $O_1, \ldots, O_t$ be the orbits of $A$. 
Let $G=G^*(A \wr B)$. Then, $G = (V\times W,E)$ for some edge function $E:P_2(V\times W)\to \{0, \ldots, k-1\}$. 
Assume to the contrary that $A \wr B \in GR$. 
Then, $Aut(G)=A \wr B$. 
We use this fact to construct a graph $G'=(W\times\{1, \ldots, t\},E')$ such that $Aut(G')=\row{B}{t}$, thus contradicting the assumption that  $\row{B}{t} \not\in GR$.  

Let $Q_1,\ldots, Q_s$ be the orbits of $B$, and denote $F(r,i,j) = rt^2+it+j$.
We define the edge function $E'$ as follows.

$$ E'\{(w_1,i),(w_2,j)\}= \left\{ 
\begin{array}{ll}
r  & \textrm{if } 
w_1 \neq w_2, \textrm{ and for some } v_1 \in O_{i} \textrm{ and} \\ & v_2 \in O_{j}, \ E\{ (v_1,w_1), (v_2,w_2)\}=r,\\
F(r,i,j)+k  &  \textrm{if } w_1=w_2, \textrm{ and } w_1\in    Q_r.
\end{array} 
\right.$$ 

First observe that this definition does not depend on the choice of $v_1$ and $v_2$ in the first line. Indeed, this is so, since for fixed $w_1$ and $w_2$, the colors of edges $\{(v_1,w_1), (v_2,w_2)\}$ in $G$ are determined by orbits to which $v_1$ and $v_2$ belong. Thus, $E'$ is well-defined. 

\begin{figure}\label{fig1}
\begin{tikzpicture}[auto,inner sep=1pt, 
minimum size=5pt]

  
      \draw[-] (1,0)--(5,0);

         \draw[-] (1,0)--(1,4);
        \draw[-] (2,0)--(2,4);
          \draw[-] (3,0)--(3,4);
        \draw[-] (4,0)--(4,4);
            \draw[-] (5,0)--(5,4);
            

\foreach \b in {0,1,2,3}{    
  \foreach \a in {0,1,...,4}{            
\draw (\a+7,\b+0.5) node[circle,fill=black,draw]{};            }}

\draw[-] (7,0.5)--(11,0.5);
\draw[-] (7,1.5)--(11,1.5);
\draw[-] (7,2.5)--(11,2.5);
\draw[-] (7,3.5)--(11,3.5);
      
\foreach \b in {0,1,2,3}{    
  \foreach \a in {0,1,...,4}{ 
  \draw  (\a+1,\b+0.5) ellipse (1mm and 5mm);}
  }
  
\draw[-,thick] (1.2,2.7)--(2.8,3.7);
\draw[-,thick] (1.2,2.3)--(2.8,3.3);
\draw[-,thick] (7,2.5)--(9,3.5);

\draw[-,thick] (4.2,2.7)--(4.8,3.7);
\draw[-,thick] (4.2,2.3)--(4.8,3.3);
\draw[-,thick] (10,2.5)--(11,3.5);

\draw[-,thick,dashed] (2.2,1.6)--(4.8,0.6);
\draw[-,thick,dashed] (2.2,1.4)--(4.8,0.4);
\draw[-,thick,dashed] (8,1.5)--(11,0.5);

\draw[-,thick,dotted] (3.2,2.7)--(3.8,1.7);
\draw[-,thick,dotted] (3.2,2.3)--(3.8,1.3);
\draw[-,thick,dotted] (9,2.5)--(10,1.5);

\draw (0.2,2.5) node {$G^*(A)$};
\draw (0.4,1.5) node {$V$};
\draw (6.2,2) node {$t$};
\draw (3,-0.5) node {$G^*(B); \;\; W$};
\draw (9,-0.5) node {$G^*(B); \;\; W$};
\draw (3,4.5) node {$G=G^*(A\wr B)$};
\draw (9,4.5) node {$G' = G^*(B\times I_t)$};

 

  \end{tikzpicture}
\caption{Corresponding non-vertical edges in $G$ and $G'$}
\end{figure}
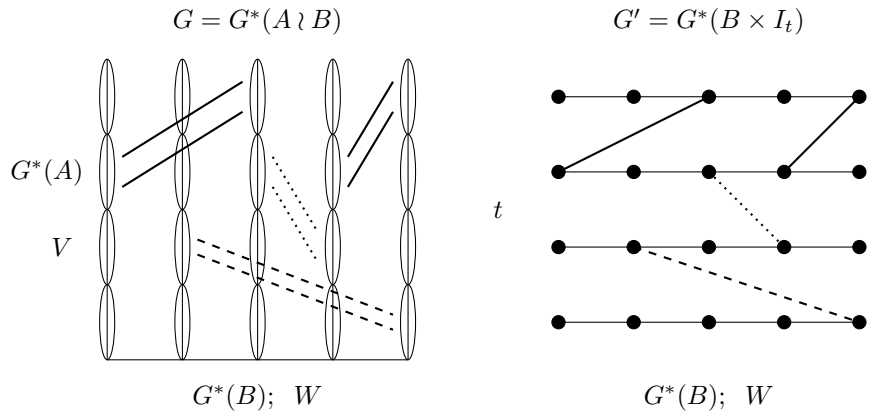

The construction is presented schematically in  Figure~1. The graph $G=$ $G^*(A \wr B)$ on the left was discussed earlier. 
The graph $G'$, on the right, is presented as $t$ copies of $(B,W)$ put horizontally. The figure shows that
the points on vertical lines in $G'$ correspond to the orbits on vertical lines in $G$. The colors of the vertical edges  in $G'$  are given by the second line of the formula for $E'$. 

The colors of non-vertical edges in $G'$ are determined by the colors of the corresponding non-vertical edges in $G$.
We make use of the fact that two orbits $O$ and $Q$ in various copies of $A$ in $G$ all the edges joining $O$ and $Q$ have the same color. 
In other words, the non-vertical edges in $G'$ are obtained from the corresponding non-vertical edges in $G$ just by collapsing orbits of $A$ to points.

Note that the value $F(r,i,j)+k$ is always greater than the value $r$ in the first line of the definition of $E'$, which means that the colors of vertical edges are always different than the colors of non-vertical edges. Moreover, since for different triples $(i,j,r)$ the values of $F(r,i,j)+k$ are different, it means that different vertical edges in $G'$, say  $\{(w_1,i_1), (w_1,j_1)\}$ and $\{(w_2,i_2), (w_2,j_2)\}$,  have  different colors unless $w_1$ and $w_2$ are in the same orbit of $B$, $i_1=i_2$, and $j_1=j_2$.

We show that $\row{B}{t} = Aut(G')$.
Let $\phi \in \row{B}{t}$. We  show that $\phi$ preserves the edge colors in $G'$. Indeed, we have $(w,i)\phi = (w\beta,i)$ for some $\beta\in B$ (for any any $w$ and $i$). If $w_1\neq w_2$, then $w_1\beta \neq w_2\beta$, and
 $$\{(w_1,i),(w_2,j)\}\phi = \{(w_1\beta,i),(w_2\beta,j)\},$$ 
 which by definition of $E'$ has the same color $r$ as $\{(w_1,i),(w_2,j)\}$. In turn, 
 $$\{(w,i),(w,j)\}\phi = \{(w\beta,i),(w\beta,j)\},$$ has the same color as $\{(w,i),(w,j)\}$,
 since $w\beta$ is in the same orbit as $w$. 
 Consequently, $\row{B}{t} \subseteq Aut(G')$.

For the opposite inclusion, assume now that $\phi \in Aut(G')$. By the remarks following the definition of $E'$ (and since $B\neq I_2$), $\phi$ preserves the partition into sets of the form $\{(w,i) : i=1,\ldots,t\}$ 
(columns of $G'$) and fixes each set of the form $\{(w,i) : w\in B\}$ (rows of $G'$).  This means that $\phi \in S_W\times I_t$, and is of the form $(w,i)\phi = (w\gamma,i)$, for some permutation $\gamma$ of $W$. Since $\phi$
preserves the colors of non-vertical edges in $G'$ (that by the definition of $E'$, are obtained by collapsing orbits in $A$ in $G$ to single points), the permutation $(v,w)\phi' = (v,w\gamma)$
preserves the colors of edges in $G$. Hence, $\phi' \in A \wr B$, and consequently $\gamma\in B$. This proves $\row{B}{t} \supseteq Aut(G')$, and completes the proof. \end{proof}

\begin{Lemma}\label{C}
Let $A\in GR \cup \{I_2\}$. If $A$ has $t\geq 1$ orbits,
and  $\row{B}{t} \in GR$, then $A \wr B \in GR$. 
\end{Lemma}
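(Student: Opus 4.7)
Plan. The plan is to construct an explicit colored graph $H$ on $V\times W$ with $Aut(H)=A\wr B$. By hypothesis there is a colored graph $G'$ on $W\times\{1,\dots,t\}$ with $Aut(G')=\row{B}{t}$; since $\row{B}{t}$ preserves each column $\{(w,i):w\in W\}$ setwise, I may first shift colors in $G'$ so that its within-column edges $\{(w,i),(w,j)\}$ (for $i\neq j$) and between-column edges $\{(w_1,i),(w_2,j)\}$ (for $w_1\neq w_2$) use disjoint palettes, without altering $Aut(G')$. When $A\in GR$, I also take $H_A=G^*(A)$ on $V$ so that $Aut(H_A)=A$. Write $O_1,\dots,O_t$ for the orbits of $A$ and $o(v)$ for the index of the orbit containing $v$.

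I then define $H$ on $V\times W$ as follows. A between-fibre edge $\{(v_1,w_1),(v_2,w_2)\}$ with $w_1\neq w_2$ receives the color of $\{(w_1,o(v_1)),(w_2,o(v_2))\}$ in $G'$. A within-fibre edge $\{(v_1,w),(v_2,w)\}$ receives an injective encoding, taken from a palette disjoint from the previous one, of the pair consisting of the $H_A$-color of $\{v_1,v_2\}$ (when $A\in GR$) and, when $o(v_1)\neq o(v_2)$, the $G'$-color of $\{(w,o(v_1)),(w,o(v_2))\}$. In the case $A=I_2$ the orbits are singletons, $H_A$ is trivial, and the within-fibre colors reduce to the $G'$-within-column colors alone; $H$ then becomes a mere relabeling of $G'$ under the bijection $(v_i,w)\leftrightarrow(w,i)$, and $Aut(H)\cong Aut(G')=\row{B}{2}$ coincides with $I_2\wr B$ as a permutation group on $V\times W$, because $I_2$ contributes no nontrivial within-fibre permutation.

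For the inclusion $A\wr B\subseteq Aut(H)$, I take $\gamma=(\alpha_\cdot,\beta)\in A\wr B$: each $\alpha_w\in A$ preserves both the orbit partition of $V$ and the colors of $H_A$, while the parallel action $(w,i)\mapsto(w\beta,i)$ belongs to $\row{B}{t}=Aut(G')$ and so preserves all colors of $G'$; together these facts preserve both types of color in $H$. For the reverse inclusion, let $\phi\in Aut(H)$. The disjoint palettes force $\phi$ to respect the partition of $V\times W$ into fibres, hence $(v,w)\phi=(v\alpha_w,w\beta)$ for some $\beta\in S_W$ and some $\alpha_w\in S_V$. Preservation of the $H_A$-component of the within-fibre colors forces $\alpha_w\in Aut(H_A)=A$ when $A\in GR$; this in turn makes $\alpha_w$ orbit-preserving, so the induced map $\psi(w,i)=(w\beta,i)$ on $W\times\{1,\dots,t\}$ is well-defined. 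Preservation of between-fibre colors together with the $G'$-component of within-fibre colors then forces $\psi$ to preserve all $G'$-colors, so $\psi\in Aut(G')=\row{B}{t}$ and $\beta\in B$, completing $\phi\in A\wr B$.

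The main obstacle I anticipate is the case $A=I_2$, since $Aut(G^*(I_2))=S_2\neq I_2$ and the argument via $Aut(H_A)=A$ breaks down. This is circumvented by the observation that when $A=I_2$ the construction degenerates, making $H$ literally isomorphic to $G'$; the conclusion then follows directly from $Aut(G')=\row{B}{2}=I_2\wr B$. A secondary concern is ensuring that the within-fibre encoding cleanly separates the $H_A$-contribution from the $G'$-within-column contribution, so that $\alpha_w$ is forced into $A$ (via the $H_A$-part) while $\psi$ is simultaneously forced to preserve \emph{all} $G'$-colors (via the $G'$-part); a suitably injective encoding handles this.
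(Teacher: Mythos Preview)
Your proposal is correct and follows essentially the same approach as the paper's proof: both build a colored graph on $V\times W$ whose between-fibre edges inherit the between-column colors of $G'$ and whose within-fibre edges encode the $G^*(A)$-color together with some additional $w$-dependent data, and both handle $A=I_2$ separately via the identification $I_2\wr B=\row{B}{2}$. The only differences are cosmetic: you place the $G'$-within-column color into the within-fibre palette (the paper instead encodes the $B$-orbit of $w$ there), which makes your verification that $\psi\in Aut(G')$ slightly more direct, and your construction covers $t=1$ uniformly whereas the paper invokes Theorem~\ref{reprezentowalne} for that case.
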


\begin{proof} 
For $t=1$ the result follows by  Theorem~\ref{reprezentowalne}.
If $A=I_2$, then $A \wr B = I_2 \wr B = \row{B}{2}$ and the statement is trivial. 
In the remaining part of the proof, we assume that $A \in GR$ and $t\geq 2$. 

The proof is in a sense the converse of the previous proof. Now, we construct a colored graph $G = (V\times W,E)$ such that 
$Aut(G) = A \wr B$ using a graph $G'$ such that $Aut(G')= \row{B}{t} $ and a graph $G''$ such that $Aut(G'')= A$. For non-vertical edges in $G$ we take colors of the corresponding non-vertical in $G'$ (as in Figure~1). For vertical edges, the vertical lines in $G$ are just the copies of $G''$, yet we take copies with different coloring for different orbits of $B$. The formal definition is as follows. 

Let $E'$ and $E''$ be color functions  for graphs $G'$  and $G''$, respectively. Let $k$ be the maximum of the number of colors used in $G'$ and in $G''$.

Let $O_1, \ldots, O_t$ be the orbits of $A$. 

We define a graph $G$ such that $Aut(G) = A \wr B$. 

$$ E\{(v_1,w_1),(v_2,w_2)\}= \left\{ 
\begin{array}{ll}
E'\{(w_1,i),(w_2,j)\} & \textrm{if } 
w_1 \neq w_2, \textrm{ and } v_1 \in O_{i} \\ &\textrm{and }  v_2 \in O_{j}, \\
E''\{ v_1,v_2\} + kr &  \textrm{if } w_1=w_2, \textrm{ and } w_2\in    Q_r.
\end{array} 
\right.$$

We show that  $A \wr B = Aut(G)$.   
First suppose that $\phi\in Aut(G)$. Since the colors of vertical edges are different from the colors of the remaining edges, $f$ preserves the partition into vertical lines. It follows that $(v,w)\phi = (v\alpha_w,w\beta)$ for some permutations $\alpha_w$ of $V$ and some permutation $\beta$ of $W$. We need to show that $\alpha_w\in A$ and $\beta\in B$.

Since, for a fixed $w$, $\phi$ preserve the colors of the edges of the form $\{(v_1,w),(v_2,w)\}$, it follows that 
$$E\{(v_1,w),(v_2,w)\}=
E\{(v_1\alpha_w,w\beta),(v_2\alpha_w,w\beta)\}.$$
Consequently, $E''(v_1,v_2) = E''(v_1\alpha_w,v_2\alpha_w)$, which means that $\alpha_w\in A$. The same argument for non-vertical edges yields 
$$E\{(v_1,w_1),(v_2,w_2)\}=
E\{(v_1\alpha_{w_1},w_1\beta),(v_2\alpha_{w_2},w_2\beta)\}.$$
Since, $w_1\beta \neq w_2\beta$, and  $v_1\alpha_{w_1}$ is in the same orbit $O_i$ of $A$ as $v_1$, while $v_2\alpha_{w_2}$ is in the same orbit $O_j$ of $A$ as $v_2$, we get
$$E'\{(w_1,i),(w_2,j)\} = E'\{(w_1\beta,i),(w_2\beta,j)\},$$
for all $i,j$. It follows that the permutation $\psi$ on the set $W\times \{1,\ldots,t\}$ given by $(w,i)\psi = (w\beta,i)$ preserves the colors of $G'$. Hence, $\beta\in B$. This proves that $\phi\in A\wr B$, and consequently,  $Aut(G) \subseteq A \wr B$. 

For the converse, assume that $f\in A\wr B$. It means that $(v,w)\phi = (v\alpha_w,w\beta)$ for some  $\alpha_w \in A$ and $\beta\in B$. We show that $\phi$ preserves the colors of the edges in $G$. First consider the vertical edges, i.e. those of the form $e =\{(v_1,w),(v_2,w)\}$. It follows that 
$$E(e\phi)=
E\{(v_1\alpha_w,w\beta),(v_2\alpha_w,w\beta)\} = E''\{v_1\alpha_w,v_2\alpha_w\} +kr,$$ 
where $O_r$ is the orbit of $B$ containing $w\beta$ and $w$. Since $E''\{v_1\alpha_w,v_2\alpha_w\} = E''\{v_1,v_2\}$, we infer that $E(e\phi)=E(e)$, as required.
For non-vertical edges $e = \{(v_1,w_1),(v_2,w_2)\}$ with $w_1\neq w_2$ (and with $v_1\in O_i$ and $v_2\in O_j$) we have 
$$E(e\phi)=
E\{(v_1\alpha_{w_1},w_1\beta),(v_2\alpha_{w_2},w_2\beta)\} = $$ $$ E'\{(w_1\beta,i),(w_2\beta,j)\} = E'\{(w_1,i),(w_2,j)\} = E(e),$$
which shows that $\phi$ preserves the colors of $G$. It follows that $Aut(G) \supseteq A \wr B$, completing the proof. 
\end{proof}

To summarize the results of this section, we make use of the following nice characterization 

\begin{Proposition}\label{R} \cite[Corollary 3.7]{gre1}
For any permutation group $B$ and $t > 1$,  we have $\row{B}{t} \in GR$ if and only if $B \in DGR$. 
\end{Proposition}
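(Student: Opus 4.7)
The plan is to prove the two directions of the equivalence separately, exploiting the closure-operator framework introduced earlier.

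For the implication $\row{B}{t}\in GR \Rightarrow B\in DGR$, I would use the equivalence $\row{B}{t}\in GR \Leftrightarrow \clo{\row{B}{t}}=\row{B}{t}$ and aim to show $\cld{B}\subseteq B$. Given $\beta\in\cld{B}$, form the parallel permutation $\phi_\beta$ on $W\times\{1,\ldots,t\}$ defined by $(w,i)\phi_\beta=(w\beta,i)$. The key observation is that the $2^*$-orbitals of $\row{B}{t}$ split into two types: within a single copy $i$ they correspond canonically to the $2^*$-orbitals of $B$ on $W$, while between two distinct copies $i\neq j$ (with a fixed copy-ordering, say $i<j$) they correspond canonically to the $2$-orbitals of $B$ on $W\times W$. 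Since $\beta\in\cld{B}\subseteq\clo{B}$ preserves both the $2$-orbitals and the $2^*$-orbitals of $B$, the permutation $\phi_\beta$ preserves every $2^*$-orbital of $\row{B}{t}$, so $\phi_\beta\in\clo{\row{B}{t}}=\row{B}{t}$. But every element of $\row{B}{t}=I_t\times B$ acts in parallel as $(w,i)\mapsto(w\gamma,i)$ for some $\gamma\in B$, so comparing with $\phi_\beta$ forces $\beta\in B$.

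For the converse $B\in DGR\Rightarrow\row{B}{t}\in GR$, I would proceed constructively. Fix a colored digraph $D$ on $W$ with $Aut(D)=B$, keeping the trivial diagonal orbitals of $B$ as distinguished colors. I would then build a colored graph $G$ on $W\times\{1,\ldots,t\}$ using disjoint color palettes: within each copy $i$ a private palette realizes the orbital graph $G^*(B)$ on that copy, and for each ordered index pair $i<j$ another private palette assigns to the edge $\{(w_1,i),(w_2,j)\}$ a color that records the arc color of $(w_1,w_2)$ in $D$, under the convention that the endpoint in the smaller copy index is the tail of the arc. The inclusion $\row{B}{t}\subseteq Aut(G)$ is immediate. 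For the converse, any $\phi\in Aut(G)$ preserves each palette class, hence preserves the copy partition and fixes each copy setwise, so $\phi(w,i)=(f_i(w),i)$ for some $f_i\colon W\to W$. Within-copy preservation gives $f_i\in\clo{B}$; the between-copy condition at the diagonal pair $(w,w)$, together with the fact that diagonal orbitals have their own distinguished colors, forces $f_i(w)=f_j(w)$ for every $w$, so all $f_i$ coincide with a common $f$; and the remaining between-copy conditions state that $f\times f$ preserves arc colors of $D$, i.e.\ $f\in Aut(D)=B$, whence $\phi\in\row{B}{t}$.

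The main obstacle lies in the converse direction: one must exhibit a graph whose automorphism group is exactly $\row{B}{t}$, ruling out the extraneous copy-permuting or copy-mixing elements that can appear in the a priori larger group $\clo{\row{B}{t}}$. The disjoint-palette device rigidifies the copy partition, the ``smaller-index-as-tail'' convention transports the directed information of $D$ into the unordered edge structure of $G$, and the trivial diagonal orbitals supply the leverage needed to collapse the $t$ copy-maps $f_i$ into a single element of $B$.
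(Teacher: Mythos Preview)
The paper does not prove this proposition: it is quoted verbatim as \cite[Corollary~3.7]{gre1} and invoked without argument in the proof of Theorem~\ref{kon}. There is therefore no in-paper proof to compare your proposal against.

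That said, your argument is correct and is essentially the natural one. For the direction $\row{B}{t}\in GR\Rightarrow B\in DGR$, your key observation---that for fixed $i\neq j$ the $2^*$-orbitals of $\row{B}{t}$ on pairs $\{(w_1,i),(w_2,j)\}$ are in bijection with the $2$-orbitals of $B$ on $(w_1,w_2)$ (including the trivial diagonal ones)---is exactly what makes the parallel permutation $\phi_\beta$ land in $\clo{\row{B}{t}}$ whenever $\beta\in\cld{B}$. For the converse, your construction with disjoint palettes per copy and per index-pair, together with the ``smaller index is tail'' convention, is a clean way to encode the colored digraph $D$ inside an undirected colored graph; the diagonal-color step forcing $f_i=f_j$ is the only place where a little care is needed, and you handle it correctly by reserving distinguished colors for the trivial orbitals. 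One minor remark: the intermediate conclusion $f_i\in\clo{B}$ from the within-copy palettes is never actually used, since the between-copy conditions already yield $f\in Aut(D)=B$ directly; the within-copy palettes serve only to pin each copy setwise.
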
 

Combining the above with the lemmas proved in this section we get the following.

\begin{Theorem} \label{kon}
Let $A$ and $B$ be permutation groups.
Then, $A \wr B \in GR$ if and only if $A \in GR \cup \{I_2\}$ and one of the following holds. 
\begin{enumerate}
    \item $B \in GR\cup \{I_2\}$, or 
\item $B \in DGR \setminus (GR\cup \{I_2\})$ and $A$ is intransitive. 
\end{enumerate}
\end{Theorem}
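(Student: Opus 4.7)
The plan is to combine the lemmas of this section with Proposition~\ref{R} through a systematic case analysis on whether $B = I_2$, whether $A$ is transitive, and into which of the classes $GR$, $\{I_2\}$, or $DGR \setminus (GR \cup \{I_2\})$ the group $B$ falls. A small observation I will use throughout is that $I_2 \in DGR$ (color the two directed edges of a $2$-vertex digraph with different colors), so $GR \cup \{I_2\} \subseteq DGR$.

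For the forward direction, assume $A \wr B \in GR$. Lemma~\ref{B} immediately gives $A \in GR \cup \{I_2\}$. If $B \in GR \cup \{I_2\}$ then conclusion~(1) holds. Otherwise $B \notin GR \cup \{I_2\}$, so in particular $B \neq I_2$, and the contrapositive of Lemma~\ref{A} yields $\row{B}{t} \in GR$, where $t$ is the number of orbits of $A$. If $t = 1$ then $\row{B}{1} = B \in GR$, contradicting $B \notin GR$; so $t \geq 2$, i.e.\ $A$ is intransitive, and Proposition~\ref{R} forces $B \in DGR$, placing us in case~(2).

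For the backward direction, assume $A \in GR \cup \{I_2\}$ and that (1) or (2) holds. If $A = I_2$, the short reduction inside Lemma~\ref{C}'s proof gives $A \wr B = \row{B}{2}$, and Proposition~\ref{R} shows this lies in $GR$ exactly when $B \in DGR$; both case~(1) (via $GR \cup \{I_2\} \subseteq DGR$) and case~(2) supply this. If $A \in GR$ is intransitive (so $t \geq 2$), then in both cases~(1) and (2) the group $B$ lies in $DGR$, hence $\row{B}{t} \in GR$ by Proposition~\ref{R}, and Lemma~\ref{C} yields $A \wr B \in GR$. Finally, if $A \in GR$ is transitive ($t = 1$) then only case~(1) can hold; if $B \in GR$ I apply Theorem~\ref{reprezentowalne}, and if $B = I_2$ I note that $A \wr I_2$ acts as $A \times A$ on two disjoint copies of $V$ and construct the required graph directly: take two color-disjoint copies of $G^*(A)$ joined by edges of a fresh color, so that the two copies cannot be swapped and the resulting colored graph has automorphism group exactly $A \times A$.

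I expect the main obstacle to be not depth but careful bookkeeping, because $I_2$ behaves exceptionally ($I_2 \in DGR$ but $I_2 \notin GR$) and must be matched against the precise hypotheses of Lemma~\ref{C} and Proposition~\ref{R} in each case. In particular, the transitive $A \in GR$, $B = I_2$ case slips past Lemma~\ref{C} (whose hypothesis $\row{B}{t} \in GR$ fails for $t=1$ and $B = I_2$) and needs the short direct construction indicated above.
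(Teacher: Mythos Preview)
Your proof is correct and follows essentially the same route as the paper's: both assemble Lemmas~\ref{B}, \ref{A}, \ref{C} and Proposition~\ref{R} into the obvious case analysis. The one substantive difference is the subcase $A\in GR$ transitive, $B=I_2$: the paper dispatches $A\wr I_2$ by citing \cite[Lemma~3.1]{grekis1} for the disjoint-union action $A\oplus A$, whereas you give a self-contained construction (two color-disjoint copies of $G^*(A)$ with a fresh cross color), which is perfectly fine and arguably preferable. One notational quibble: what you call ``$A\times A$ on two disjoint copies of $V$'' is, in the paper's conventions, the group $A\oplus A$; the symbol $A\times B$ is reserved here for the product action on $V\times W$ (Section~\ref{sec1}), so you should adjust the notation to match.
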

\begin{proof}
The ``if'' part follows directly form Lemmas~\ref{C} combined with Proposition~\ref{R}. For the ``only if'' part we use Lemmas~\ref{B} and~\ref{A}. The only  unsettled case by these lemmas is $A\wr I_2$. Here we apply \cite[Lemma~3.1]{grekis1} and the fact that   $A\wr I_2 = A \oplus A$. This yields that   $A\wr I_2 \in  GR$ if and only if $A \in GR \cup \{I_2\}$, and completes the proof of the theorem. 
\end{proof} 

All the proofs in this section may be easily modified to the case of colored directed graphs. The situation is in fact simpler, and some parts of the proof are void. As a result we obtain the following.

\begin{Theorem} \label{kondir}
Let $A$ and $B$ be permutation groups. 
Then $A \wr B$ $\in DGR$ if and only if both $A, B\in DGR$. 
\end{Theorem}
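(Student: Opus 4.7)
The ``if'' direction is Theorem~\ref{reprezentowalne}(2). For the converse, my plan is to first establish the key structural inclusion
\begin{equation*}
\cld{A}\wr\cld{B} \ \subseteq\ \cld{A\wr B},
\end{equation*}
and then exploit it. If $A\wr B\in DGR$ then $\cld{A\wr B} = A\wr B$, so combined with the obvious inclusion $A\wr B \subseteq \cld{A}\wr\cld{B}$ this gives $\cld{A}\wr\cld{B} = A\wr B$. For any $\alpha'\in\cld{A}$, the permutation acting as $\alpha'$ on one fixed fibre and as the identity on all others lies in $\cld{A}\wr\cld{B} = A\wr B$, and comparing with the wreath product formula $(v,w)\gamma = (v\alpha_w, w\beta)$ forces $\alpha'\in A$; similarly, for any $\beta'\in\cld{B}$ the permutation $(v,w)\mapsto (v,w\beta')$ belongs to $\cld{A}\wr\cld{B} = A\wr B$, forcing $\beta'\in B$. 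Hence $A, B\in DGR$, as required.

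To prove the inclusion, I would compute the $2$-orbitals of $A\wr B$ directly from $(v,w)\gamma = (v\alpha_w, w\beta)$: a \emph{vertical} pair $((v_1,w),(v_2,w))$ has its $2$-orbital determined by the $2$-orbital of $A$ containing $(v_1,v_2)$ together with the $B$-orbit of $w$, while a \emph{non-vertical} pair $((v_1,w_1),(v_2,w_2))$ with $w_1\neq w_2$ has its $2$-orbital determined by the $2$-orbital of $B$ containing $(w_1,w_2)$ together with the $A$-orbits of $v_1$ and $v_2$. Given $\gamma' = ((\alpha'_w)_{w\in W}, \beta')$ with $\alpha'_w\in\cld{A}$ and $\beta'\in\cld{B}$, both pair types are preserved by $\gamma'$ because each $\alpha'_w$ preserves $2$-orbitals and orbits of $A$, while $\beta'$ preserves $2$-orbitals and orbits of $B$; this yields $(\ast)$. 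Here I use the directed analogue of Lemma~\ref{l:orbits}, which holds without the $I_2$ exception (singleton orbits of $A$ are distinguished by trivial singleton $2$-orbitals, and distinct orbits by the non-trivial ones between them).

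The main --- indeed the only substantial --- obstacle is the $2$-orbital description of $A\wr B$ above; everything else is routine bookkeeping. Two features make the directed argument noticeably cleaner than the undirected one: $I_2\in DGR$, so no special case needs to be carved out at the end of the theorem, and the direct construction $(v,w)\mapsto (v,w\beta')$ bypasses the detour through the parallel multiple $\row{B}{t}$ that was needed in Lemma~\ref{A}.
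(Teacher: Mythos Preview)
Your argument is correct. The key identity $\cld{A\wr B}=\cld{A}\wr\cld{B}$ follows exactly as you describe from the directed version of Proposition~\ref{p:1} (your two-case orbital description of $A\wr B$), and the extraction of $\alpha'\in A$ and $\beta'\in B$ from the equality is sound because the representation $(v,w)\gamma=(v\alpha_w,w\beta)$ is unique. Your remark that the directed analogue of Lemma~\ref{l:orbits} needs no $I_2$ exception is also right: the trivial (diagonal) $2$-orbitals already encode the orbits.

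Compared with the paper, the difference is one of packaging rather than of idea. The paper's proof is simply ``modify the lemmas of this section'': a directed Lemma~\ref{B} exhibits, for $\alpha\in\cld{A}\setminus A$, a permutation of $V\times W$ acting as $\alpha$ on one fibre and trivially elsewhere; a directed Lemma~\ref{A} does the analogous thing with $\beta\in\cld{B}\setminus B$ acting on the base (and here the detour through $\row{B}{t}$ and Proposition~\ref{R} becomes void, exactly as you note). You have absorbed both constructions into the single structural inclusion $\cld{A}\wr\cld{B}\subseteq\cld{A\wr B}$ and then read off the two consequences at once. The gain is conciseness and a reusable closure formula; the paper's modular route has the advantage of paralleling the undirected argument line by line, making the ``simpler, some parts void'' claim visible.
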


\section{Product action of the wreath product}\label{pro}

Now we study the action of the wreath product on the set $V^W$ of functions from a set $W$ to a set $V$, refereed to as the product action. We consider the pairs $(\beta, (\alpha_w)_{w\in W})$, where $\beta\in S_W$ and $(\alpha_w)_{w\in W}$ is a family of permutations in $S_V$ indexed by elements of $W$. For any such pair we define a permutation $\phi$ of the set $V^W$ given by the formula
\begin{equation}
(f\phi)(w) = (f(w\beta))\alpha_w,  \label{eq:vw}  \end{equation} 
for any $f\in V^W$ and any $w\in W$.

Given two permutation groups $(A,V)$ and $(B,W)$, 
by  $(A \Wr B)$ we denote the permutation group consisting of all permutations $\phi$ of the set $V^W$ defined by the formula $(\ref{eq:vw})$ with  $\beta\in B$ and $\alpha_w \in A$ for all $w \in W$.
We note that our notation here is somewhat nonstandard: we use $A\wr B$ and $A\Wr B$ to denote two
different permutations groups  (sets of permutations) obtained from $A$ and $B$, corresponding to two different actions of the abstract wreath product.

A standard way to visualize $A\Wr B$ is to think about elements being permuted as functions from $V^W$  (or, in other words, \emph{global sections} containing one point from each fibre) (see Figure~2; cf. Cameron~\cite[p. 103]{cam}). In case, when $|W|=n$ is finite, one can think about $n$-tuples of elements of $V$, with coordinates permuted by $\beta$, and at each coordinate permuted independently by permutations $\alpha_w$ (permutations of axes and independent permutations along axes).

It should be clear, that 
formula~$(\ref{eq:vw})$ above defines, in general, a one-to-one mapping from the set of pairs $(\beta, (\alpha_w)_{w\in W})$, where $\beta\in S_W$ and $(\alpha_w)_{w\in W}$ is a family of permutations $a_w\in S_V$, to the set of permutations $\phi$ of $V^W$. This follows easily from the well-known fact that the product action of the wreath product $S_V\Wr S_W$ is faithful. 
We will need a simple consequence of this 

\begin{Lemma}\label{l:fnotin}
If $\beta$ is a permutation of $W$ not in $B$, or one of $\alpha_w$ is a permutation of $V$ not in $A$, then $\phi$ given by $(\ref{eq:vw})$ does not belong to $A\Wr B$.
\end{Lemma}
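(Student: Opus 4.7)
The plan is a short argument by contradiction that rides entirely on the injectivity of the parametrization $(\beta,(\alpha_w)_{w\in W})\mapsto \phi$ just recalled before the lemma statement.

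First, I would assume toward a contradiction that the permutation $\phi$ defined by $(\ref{eq:vw})$ from the data $(\beta,(\alpha_w)_{w\in W})$ does lie in $A\Wr B$. By the very definition of $A\Wr B$, this means there exist $\beta'\in B$ and a family $(\alpha'_w)_{w\in W}$ with $\alpha'_w\in A$ for every $w\in W$ such that $\phi$ is \emph{also} given by formula $(\ref{eq:vw})$ with $(\beta',(\alpha'_w))$ in place of $(\beta,(\alpha_w))$.

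Next, I would invoke the observation immediately preceding the lemma: since the product action of $S_V\Wr S_W$ on $V^W$ is faithful, formula $(\ref{eq:vw})$ defines an injective map from the set of all pairs $(\beta,(\alpha_w)_{w\in W})$ with $\beta\in S_W$ and $\alpha_w\in S_V$ into the symmetric group of $V^W$. Applied to the two representations of the same $\phi$, this injectivity forces $\beta=\beta'$ and $\alpha_w=\alpha'_w$ for every $w\in W$. Consequently $\beta\in B$ and $\alpha_w\in A$ for all $w\in W$, contradicting the hypothesis of the lemma.

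I do not expect any real obstacle here; the entire content is the uniqueness of the $(\beta,(\alpha_w))$-expansion of a product-action permutation, which the excerpt has already justified by citing the faithfulness of $S_V\Wr S_W$ in its product action. The only thing to be careful about is to make explicit the (trivial) direction of the contrapositive, namely that if $\phi\in A\Wr B$ then \emph{its} unique expansion already has $\beta\in B$ and each $\alpha_w\in A$, so that the given expansion (which must coincide with it) inherits the same property.
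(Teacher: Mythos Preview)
Your proposal is correct and matches the paper's own treatment: the paper states the lemma as an immediate consequence of the injectivity of the map $(\beta,(\alpha_w)_{w\in W})\mapsto\phi$ (equivalently, of the faithfulness of the product action of $S_V\Wr S_W$), and gives no further argument. Your write-up simply makes explicit the contrapositive reasoning the paper leaves implicit.
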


The graph $G=G^*(A\Wr B)$ is much more complex than $G^*(A\wr B)$. 
We make only a few remarks. First, we may partition the edges of $G$ into $|W|$ classes. For each cardinal $1 \leq i \leq |W|$, we define the class $R_i$ as consisting of those edges $(f,g)$ of $G$ in which the values of the functions $f$ and $g$ differ at exactly $i$ points. It is clear, that any permutation $\phi \in A\Wr B$ preserves the partition into the classes $R_i$. The class $R_1$ may be viewed as the edges parallel to one of the axes of the coordinate system. In turn, $R_2$ may be viewed as edges contained in planes, not on lines, etc. 
One may observe that all lines parallel to one of the axes in $G=G^*(A\Wr B)$ contains copies of $G^*(A)$ whose colors depend on orbits of $B$. Generally, colors of other edges depend on actions of $B$ on functions whose values are orbits and orbitals of $A$. 

Even in case of transitive groups $A$ and $B$ the construction is complex and there is  no counterpart of Theorem~\ref{th:1}. Yet, using just the properties of the Galois connection we may prove, at least, the following useful fact.

\begin{Lemma}\label{l:awrb}
$$\clo{A\Wr B} \subseteq \clo{A}\Wr \clo{B}.$$  
\end{Lemma}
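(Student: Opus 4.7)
The plan is to use the Galois-connection machinery established in Section~\ref{sec1} together with the product-action version of Theorem~\ref{reprezentowalne}. The target inclusion has the form $\overline{X} \subseteq Y$, so the standard route is to find an intermediate $Y$ that (i) already contains $X = A\Wr B$, and (ii) is itself closed, i.e.\ $Y = \overline{Y}$. Monotonicity of the closure then yields $\overline{X} \subseteq \overline{Y} = Y$, which is exactly what we want.

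First I would verify the easy containment $A\Wr B \subseteq \overline{A}\Wr \overline{B}$. This is immediate from the definition~(\ref{eq:vw}): every $\phi \in A\Wr B$ is determined by a pair $(\beta, (\alpha_w)_{w\in W})$ with $\beta \in B$ and $\alpha_w \in A$, and since $A \subseteq \overline{A}$ and $B \subseteq \overline{B}$, the same pair also defines an element of $\overline{A}\Wr\overline{B}$.

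Next I would argue that $\overline{A}\Wr\overline{B}$ is closed. By construction, $\overline{A} = Aut(G^*(A))$ and $\overline{B} = Aut(G^*(B))$, so both $\overline{A}$ and $\overline{B}$ belong to $GR$. The remark after Theorem~\ref{reprezentowalne} states that the closure property of the wreath product in $GR$ holds also for the product action, so $\overline{A}\Wr\overline{B} \in GR$. Equivalently, $\overline{\overline{A}\Wr\overline{B}} = \overline{A}\Wr\overline{B}$.

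Finally, I would combine the two observations: applying the closure operator to both sides of $A\Wr B \subseteq \overline{A}\Wr\overline{B}$ and using monotonicity gives
\[
\overline{A\Wr B} \;\subseteq\; \overline{\overline{A}\Wr\overline{B}} \;=\; \overline{A}\Wr\overline{B},
\]
which is the claim. There is no serious obstacle here; the only point that requires care is citing the product-action analogue of Theorem~\ref{reprezentowalne}, since Theorem~\ref{reprezentowalne} is formally stated for $\wr$ rather than $\Wr$. If one prefers not to rely on that remark, the same step can be carried out by exhibiting a concrete colored graph whose automorphism group contains $\overline{A}\Wr\overline{B}$ and is contained in $\overline{A}\Wr\overline{B}$ (for instance a suitable product/composition of the orbital graphs $G^*(A)$ and $G^*(B)$ adapted to the product action), but invoking the cited result keeps the argument short.
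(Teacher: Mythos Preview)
Your argument is correct and is essentially the same as the paper's: both proofs rest on the trivial inclusion $A\Wr B \subseteq \clo{A}\Wr\clo{B}$ together with monotonicity of the closure. The only difference is packaging. You invoke explicitly that $\clo{A}\Wr\clo{B}\in GR$ (the product-action analogue of Theorem~\ref{reprezentowalne}, stated in the paper as Theorem~\ref{t:GR}) to get $\clo{\clo{A}\Wr\clo{B}}=\clo{A}\Wr\clo{B}$, whereas the paper phrases the reduction through the dual order on orbital graphs, using $G^*(\clo{A\Wr B})=G^*(A\Wr B)\preceq G^*(\clo{A}\Wr\clo{B})$; that ``if and only if'' step, read carefully, also relies on $\clo{A}\Wr\clo{B}$ being closed, so the two arguments are the same in substance. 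Your version has the minor advantage of making the dependence on Theorem~\ref{t:GR} visible.
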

\begin{proof} We use the properties of the Galois connection.
We need to prove that $Aut(G^*(A\Wr B)) \subseteq \clo{A}\Wr \clo{B}$. This holds if and only if $G^*(Aut(G^*(A\Wr B))) \preceq G^*(\clo{A}\Wr \clo{B})$. The first term is equal to $G^*(A\Wr B)$. Thus, we need to prove that $G^*(A\Wr B) \preceq G^*(\clo{A}\Wr \clo{B})$. The latter follows by the obvious fact that $A\Wr B \subseteq \clo{A}\Wr \clo{B}$.
\end{proof}

\begin{figure}\label{fig2}
\begin{tikzpicture}[auto,inner sep=1pt, 
minimum size=3pt]

  
\draw[-] (1,0)--(7,0);
\draw[-] (1,0)--(1,4);
\draw[-] (2,0)--(2,4);
\draw[-] (3,0)--(3,4);
\draw[-] (4,0)--(4,4);
\draw[-] (5,0)--(5,4);
\draw[-] (6,0)--(6,4);
\draw[-] (7,0)--(7,4);
            
\draw[-,thick] (1,1.5)--(2,2) ; 
\draw[-,thick] (2,2)--(3,1.5) ; 
\draw[-] (3,1.5)--(4,2);
\draw[-] (3,1.5)--(4,1);

\draw[-] (4,2)--(5,3) ;
\draw[-] (4,1)--(5,1.5) ;

\draw[-] (5,3)--(6,2) ;
\draw[-] (5,1.5)--(6,2) ; 
\draw[-,thick] (6,2)--(7,1.5) ;

\draw (1,1.5) node[circle,fill=black,draw]{}; \draw (2,2) node[circle,fill=black,draw]{}; \draw (3,1.5)
node[circle,fill=black,draw]{};

\draw (4,2) node[circle,fill=black,draw]{};
\draw (4,1) node[circle,fill=black,draw]{};\draw (5,3) node[circle,fill=black,draw]{};\draw (5,1.5) node[circle,fill=black,draw]{}; 
\draw (6,2) node[circle,fill=black,draw]{}; \draw (7,1.5) node[circle,fill=black,draw]{};

\draw (0.7,3.5) node {$V$};
\draw (4,-0.5) node {$W$};

\draw (4.3,2.7) node {$f$};
\draw (5.5,1.5) node {$g$};
\draw (7.5,3.5) node {\ };

  \end{tikzpicture}
\caption{An edge  in $G^*(A\Wr B)$ of the class $R_2$ represented by a~pair of functions $\{f,g\}$ differing at exactly 2 points}
\end{figure}
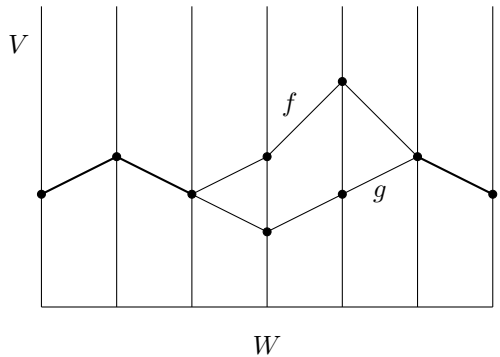

In \cite{je}, the following is proved. 

\begin{Theorem}\label{t:GR}
If $A, B \in GR$, then $A \Wr B \in GR$. 
\end{Theorem}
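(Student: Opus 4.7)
My plan is to obtain Theorem~\ref{t:GR} as an immediate formal consequence of Lemma~\ref{l:awrb} together with the characterization $X \in GR \iff X = \clo{X}$ discussed in Section~\ref{sec1}. The idea is that all the combinatorial content has already been absorbed into Lemma~\ref{l:awrb}, and what remains is a one-line closure-operator computation.

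Concretely, I would proceed as follows. Recall that $X \subseteq \clo{X}$ always, and that $A, B \in GR$ means $A = \clo{A}$ and $B = \clo{B}$. Applying Lemma~\ref{l:awrb} to the pair $(A,B)$ yields
\[
A \Wr B \;\subseteq\; \clo{A \Wr B} \;\subseteq\; \clo{A} \Wr \clo{B} \;=\; A \Wr B,
\]
so all three terms coincide. Hence $A \Wr B = \clo{A\Wr B} = Aut(G^*(A\Wr B))$, which exhibits $A \Wr B$ as the automorphism group of the colored graph $G^*(A \Wr B)$. Therefore $A \Wr B \in GR$, as desired.

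The only potential obstacle is that this approach does not produce an explicit ``small'' colored graph witnessing $A \Wr B \in GR$ the way the original argument in \cite{je} does, since the graph we use, $G^*(A \Wr B)$, may involve many more colors than necessary. For the present qualitative statement, however, this is irrelevant: the existence of some colored graph is all that is required, and the Galois-connection machinery already developed in Section~\ref{sec1}, together with Lemma~\ref{l:awrb}, gives this in one line. If one wanted the refined finite-colored version of \cite{je}, one would instead have to carry out the explicit construction from the orbital graphs $G^*(A)$ and $G^*(B)$ and count the colors used, but that is not what the theorem as stated demands.
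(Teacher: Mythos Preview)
Your argument is correct: once Lemma~\ref{l:awrb} is in hand, the chain
\[
A \Wr B \;\subseteq\; \clo{A \Wr B} \;\subseteq\; \clo{A}\Wr\clo{B} \;=\; A\Wr B
\]
immediately forces $A\Wr B = \clo{A\Wr B}$, hence $A\Wr B \in GR$. There is no gap.

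This is, however, a genuinely different route from the one the paper takes. The paper does not prove Theorem~\ref{t:GR} internally at all: it simply imports the result from~\cite{je}, where the argument is an explicit construction of a colored graph on $V^W$ built out of $G^*(A)$ and $G^*(B)$, with attention paid to the number of colors needed. Your proof instead leverages the Galois-connection formalism of Section~\ref{sec1} together with Lemma~\ref{l:awrb}, which the paper proves \emph{after} stating Theorem~\ref{t:GR} and for other purposes. What your approach buys is economy and transparency: the theorem drops out in one line, and it is manifestly insensitive to finiteness. What the original~\cite{je} construction buys is quantitative control over the number of colors, which your argument (via $G^*(A\Wr B)$) cannot give. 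As you correctly note, for the qualitative statement at hand your version is entirely adequate, and arguably the more natural proof given the machinery already assembled in the paper.
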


The proof in \cite{je} is only for finite permutation groups, but the part concerning the result formulated above generalizes easily to infinite groups.

Similarly as in the previous section we have the following

\begin{Lemma}\label{PNGR}
If  $A \notin DGR$, then $A \Wr B \notin GR$. 
\end{Lemma}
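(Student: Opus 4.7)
The plan is to mimic the proof of Lemma~\ref{B} but invoke the stronger closure: since $A \notin DGR$, we have $A \neq \cld{A}$, so there exists $\alpha \in \cld{A} \setminus A$, i.e.\ $\alpha$ preserves every $2$-orbital of $A$ but $\alpha \notin A$. Fix any $w_0 \in W$ and define a permutation $\phi$ of $V^W$ by
\[
(f\phi)(w_0) = f(w_0)\alpha \quad \text{and} \quad (f\phi)(w) = f(w) \text{ for } w \neq w_0.
\]
Thus $\phi$ corresponds to the pair $(\beta,(\alpha_w))$ with $\beta = \mathrm{id}$, $\alpha_{w_0} = \alpha$, and $\alpha_w = \mathrm{id}$ for $w\neq w_0$. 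Since $\alpha \notin A$, Lemma~\ref{l:fnotin} immediately gives $\phi \notin A\Wr B$.

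The main task is then to show $\phi \in Aut(G^*(A\Wr B))$, i.e.\ that $\phi$ preserves the $2^*$-orbitals of $A\Wr B$. I would prove the slightly stronger statement that for every ordered pair $(f,g)\in V^W\times V^W$ there exists $\psi\in A\Wr B$ with $(f,g)\psi = (f\phi, g\phi)$. Set $v_1 = f(w_0)$ and $v_2 = g(w_0)$. Because $\alpha$ preserves the $2$-orbital containing $(v_1,v_2)$ (treating the $v_1=v_2$ case via the trivial diagonal $2$-orbital), there exists $\alpha' \in A$ such that $v_1\alpha' = v_1\alpha$ and $v_2\alpha' = v_2\alpha$. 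Define $\psi$ by $\beta = \mathrm{id}$, $\alpha_{w_0} = \alpha'$, and $\alpha_w = \mathrm{id}$ for $w\neq w_0$. A direct computation from (\ref{eq:vw}) shows $f\psi = f\phi$ and $g\psi = g\phi$, so $\{f,g\}$ and $\{f\phi,g\phi\}$ lie in the same $A\Wr B$-orbit, hence receive the same color in $G^*(A\Wr B)$.

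Consequently $\phi \in Aut(G^*(A\Wr B)) \setminus (A\Wr B)$, so $A\Wr B \neq \clo{A\Wr B}$ and therefore $A\Wr B \notin GR$.

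The only delicate point, and really the reason the hypothesis is $DGR$ rather than $GR$, is in the middle step: we must simultaneously match $\alpha$ on the \emph{ordered} pair $(v_1,v_2)$ at coordinate $w_0$, which is exactly the content of $\alpha\in \cld{A}$; the weaker condition $\alpha \in \clo{A}$ (used in Lemma~\ref{B}) would only allow us to match $\alpha$ on a single value, which is insufficient once $f(w_0)\neq g(w_0)$. Everything else reduces to the trivial observation that the identity in $A$ fixes each of $f(w)$ and $g(w)$ for $w\neq w_0$.
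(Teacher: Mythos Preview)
Your proof is correct and follows essentially the same route as the paper's: fix $w_0$, take $\alpha\in\cld{A}\setminus A$, build $\phi$ acting by $\alpha$ only on the $w_0$-coordinate, use Lemma~\ref{l:fnotin} to see $\phi\notin A\Wr B$, and then for each edge $\{f,g\}$ find $\alpha'\in A$ matching $\alpha$ on the pair $(f(w_0),g(w_0))$ to exhibit a $\psi\in A\Wr B$ with the same effect. The only cosmetic difference is that the paper splits into the cases $f(w_0)\neq g(w_0)$ and $f(w_0)=g(w_0)$ (invoking Lemma~\ref{l:orbits} for the latter), whereas you handle both at once via the trivial diagonal $2$-orbital; this is a harmless streamlining.
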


\begin{proof} Let $G=G^*(A \Wr B)$. Similarly as in Lemma~\ref{B}, we need to show that there exists $\phi\in Aut(G)\setminus (A \Wr B)$.   
Since $A\notin DGR$, there exists a permutation 
$\alpha\in \cld{A}\setminus A$.  
We fix an element $w_0\in W$ and define $f$ as follows
$$ (f\phi)(w) = \left\{ 
\begin{array}{ll}
(f(w))\alpha & \textrm{if } w = w_0,  \\ 
f(w) &  \textrm{otherwise, } 
\end{array} 
\right.$$ for any $w\in W$.
Thus $\phi$ acts almost as the identity modifying $f$ only for the value in $w=w_0$. 

First observe that, by Lemma~\ref{l:fnotin}, $\phi\notin A\Wr B$, as required. 
We proceed to show that $\phi$ preserves the colors of $G$.
Let $\{f,g\}$ be an edge of $G$. Then, 
$\{f,g\}\phi = \{f\phi,g\phi\}$, and we need to deal with the values $f(w_0)$ and $g(w_0)$.

We distinguish two cases. If
$f(w_0) \neq g(w_0)$,  then $(f(w_0),g(w_0))$ is a directed edge in $G(A)$. 
Since $\alpha$ preserves orbitals of $A$, this edge has the same color in $G(A)$ as $(f(w_0)\alpha,f(w_0)\alpha)$.

It follows that there exists $\alpha'\in A$ moving the directed  edge $(f(w_0),g(w_0))$ into $(f(w_0)\alpha,g(w_0)\alpha)$. Consequently, 
$ f(w_0)\alpha' = f(w_0)\alpha$ and $g(w_0)\alpha'= g(w_0)\alpha$. Now, the permutation $f'$ given by
$$ (f\phi')(w) = \left\{ 
\begin{array}{ll}
(f(w))\alpha' & \textrm{if } w = w_0,  \\ 
f(w) &  \textrm{otherwise, } 
\end{array} 
\right.$$ for any $w\in W$, belongs to $A\Wr B$ (since it is of the form (\ref{eq:vw})). We have $f\phi = f\phi'$ and  $g\phi = g\phi'$. Consequently, 
$\{f,g\}\phi = \{f,g\}\phi'$. The latter has the same color in $G^*$ as $\{f,g\}$, since $\phi'$ preserves the colors of edges. Thus, $\{f,g\}\phi$ has the same color as $\{f,g\}$, as required. 

For the second case, if $f(w_0) = g(w_0)$, then by Lemma~\ref{l:orbits},  there exists $\alpha'\in A$ such that $f(w_0)\alpha' = f(w_0)\alpha$, and in consequence, $g(w_0)\alpha'= g(w_0)\alpha$. The rest of the argument is the same as in the first case.
\end{proof}

Now, the situation is more complicated than in the case of imprimitive action of the wreath product. We need a new notion. Given a permutation group $(A,V)$, we will say that a permutation $\alpha$ of $V$ \emph{transposes orbitals} of $A$, if for every orbital $O$ of $A$, $O\alpha$ is the orbital paired with $O$. In such a case we say also that $A$ has \emph{transposable orbitals}. Obviously, such a permutation $\alpha$ belongs to $\clo{A}$. Moreover, 
if $A$ has at least one orbital that is not self-paired, then 
$\alpha\notin A$. 

Note also, that in case when all orbitals of $A$ are self-paired each $\alpha\in A$ transposes orbitals of $A$. In this case $A\in DGR$ implies $A\in GR$.

\begin{Lemma}\label{l:trans}
Let $A \in DGR \setminus (GR \cup \{I_2\})$.
If $A$ has transposable orbitals, then $A \Wr B \notin GR$.
\end{Lemma}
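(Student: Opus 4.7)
The plan is to follow the blueprint of Lemma~\ref{PNGR}: we exhibit a permutation $\phi$ lying in $Aut(G^*(A\Wr B)) = \clo{A\Wr B}$ but not in $A\Wr B$, thereby showing $A\Wr B \notin GR$. The building block is a permutation $\alpha$ of $V$ that transposes the orbitals of $A$, which exists by hypothesis. Since $\alpha$ sends each orbital to its paired orbital, it preserves every $2^*$-orbital (each such orbital is the union of a pair of paired orbitals), so $\alpha \in \clo{A}$. On the other hand, $A \in DGR \setminus GR$ gives $\cld{A} = A \subsetneq \clo{A}$, which forces $A$ to have at least one non-self-paired orbital $O$; since every element of $A$ fixes each orbital setwise while $\alpha$ sends $O$ to a distinct orbital, we have $\alpha \notin A$.

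Define $\phi : V^W \to V^W$ by $(f\phi)(w) = (f(w))\alpha$ for every $f \in V^W$ and $w \in W$. In the notation of~$(\ref{eq:vw})$, this is the permutation with $\beta = id$ and $\alpha_w = \alpha$ for every $w$, so Lemma~\ref{l:fnotin} immediately yields $\phi \notin A\Wr B$. It then remains to verify that $\phi$ preserves the colors of $G^*(A\Wr B)$, i.e., that for each edge $\{f, g\}$ there is some $\psi \in A\Wr B$ with $\{f, g\}\psi = \{f\phi, g\phi\}$.

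The key observation is that we should not look for $\psi$ carrying the ordered pair $(f, g)$ to $(f\phi, g\phi)$ --- applying $\alpha$ at a single coordinate flips the orbital there to its pair, so no such $\psi$ exists in general --- but rather for $\psi$ carrying $(f, g)$ to $(g\phi, f\phi)$, exploiting the freedom afforded by the unordered bracket. Concretely, for each $w$ we seek $\alpha'_w \in A$ with $f(w)\alpha'_w = g(w)\alpha$ and $g(w)\alpha'_w = f(w)\alpha$. When $f(w) \neq g(w)$ and $(f(w), g(w))$ lies in an orbital $O$, the pair $(f(w)\alpha, g(w)\alpha)$ lies in $O\alpha$, which is the orbital paired with $O$, hence $(g(w)\alpha, f(w)\alpha)$ lies back in $O$, and transitivity of $A$ on $O$ yields $\alpha'_w$. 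When $f(w) = g(w)$, Lemma~\ref{l:orbits} (applicable because $A \neq I_2$) ensures that $\alpha \in \clo{A}$ preserves the orbits of $A$, so $f(w)$ and $f(w)\alpha$ lie in a common orbit, again providing $\alpha'_w$. Taking $\psi$ with $\beta = id$ and components $\alpha'_w$ in the sense of $(\ref{eq:vw})$ then yields $\{f, g\}\psi = \{g\phi, f\phi\} = \{f\phi, g\phi\}$, as required.

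The principal obstacle is recognizing that the hypothesis ``$A$ has transposable orbitals'' is precisely the right strength: applying $\alpha$ coordinate-wise turns every $2$-orbital into its pair simultaneously, a change that is visible at the level of ordered orbits but becomes invisible after passing to $2^*$-orbits via the global swap $(f, g) \leftrightarrow (g, f)$. Once this geometric picture is in place, the choice of $\psi$ above is essentially forced and the verification reduces to the routine case analysis just described.
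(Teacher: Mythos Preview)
Your proof is correct and follows essentially the same route as the paper: you define $\phi$ by applying the orbital-transposing permutation $\alpha$ in every coordinate, use Lemma~\ref{l:fnotin} to get $\phi\notin A\Wr B$, and then verify $\phi$ preserves the $2^*$-orbitals by producing, for each edge $\{f,g\}$, an element $\psi\in A\Wr B$ that sends $(f,g)$ to $(g\phi,f\phi)$ coordinate by coordinate. Your justification that $\alpha\notin A$ (via $A\in DGR\setminus GR$ forcing a non-self-paired orbital) spells out what the paper handles by a remark preceding the lemma, but the argument is the same.
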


\begin{proof}
The proof is similar to the previous one. 
Let $G = G^*(A \Wr B)$. We need to show that there
exists $\phi \in Aut(G) \setminus A \Wr B$. Let $\alpha$ transposes orbitals of $A$. Then, since $A\notin GR$, $\alpha\notin A$.   We define $\phi$ as 
$(f\phi)(w) = (f(w))\alpha$, 
for any $w \in W$. 

First observe that, by Lemma~\ref{l:fnotin}, $\phi \notin A \Wr B$. Thus, it remains to show that $\phi$ preserves the colors of $G$. Let $\{f,g\}$ be an edge of $G$.
Since $\alpha$ transposes orbitals of $A$, for every $w\in W$, the directed edge  $(f(w),g(w))\alpha$ is in the same orbital of $A$ as $(g(w),f(w))$ (under assumption that $f(w) \neq g(w)$). Consequently, 
$(f(w)\alpha,g(w)\alpha)=(g(w)\alpha_w, f(w)\alpha_w)$ for some $\alpha_w \in A$.
If $f(w) = g(w)$, then by Lemma~\ref{l:orbits}, $f(w)\alpha=f(w)\alpha_w$ for some $\alpha_w \in A$.

Hence, for the permutation $\phi'$ given by $(f(w))\phi'=f(w)\alpha_w$, for any $w \in W$, we have $\{f,g\}\phi = \{f\phi, g\phi\} = \{g\phi', f\phi'\} = \{f,g\}\phi'$.   
The latter has the same color in $G$ as $\{f, g\}$, since $\phi'\in A \Wr B$.
Thus, $\{f,g\}\phi$ has the same color as $\{f, g\}$, which completes the proof.
\end{proof} 

Let us observe that the conclusion of the lemma does not hold (in general) for $A\in GR\cup\{I_2\}$,  since we know, by Theorem~\ref{t:GR}, that if $B\in GR$, then $A \Wr B \in GR$.  Similar situation is for $A=I_2$. Note that $I_2 \in DGR$ and it has transposable orbitals, but as we will see, sometimes $I_2\Wr B \in GR$, and sometimes not.

Thus, it remains to consider the cases when  $A\in DGR$ and either no $\alpha$ transposes orbitals of~$A$ or $A\in GR$ or $A=I_2$. We proceed to show that in all these cases $\clo{A\Wr B}= A\Wr B'$ for some $B'$. The case $A=I_2$ requires a separate proof.

\begin{Lemma}\label{l:i2}
For any permutation group $B$ $$\overline{I_2\Wr B} \subseteq I_2\Wr \overline{B}.$$
In particular, if $B\in GR$, then $I_2\Wr B\in GR$ .
\end{Lemma}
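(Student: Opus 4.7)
The plan is to apply Lemma~\ref{l:awrb} with $A=I_2$ and then tighten the conclusion. Since the orbital graph $G^*(I_2)$ is a two-vertex graph with one colored edge, $\overline{I_2}=S_2$, and Lemma~\ref{l:awrb} yields $\overline{I_2\Wr B}\subseteq S_2\Wr \overline{B}$. Every $\phi\in\overline{I_2\Wr B}$ therefore has the form $(f\phi)(w)=(f(w\beta))\alpha_w$ with $\beta\in\overline{B}$ and each $\alpha_w\in S_2$, and the task is to show that in fact all $\alpha_w$ are the identity, placing $\phi$ in $I_2\Wr\overline{B}$.

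The key idea is a numerical invariant of $2^*$-orbitals that is available precisely because $A=I_2$ cannot exchange the two values of $V=\{0,1\}$: for any pair $\{h_1,h_2\}$ of distinct elements of $V^W$, record the multiset, indexed by $w\in W$, of local configurations ``both $0$'', ``both $1$'', ``differ'' determined by $h_1(w),h_2(w)$. Every element of $I_2\Wr B$ acts on $V^W$ by mere precomposition with some $\beta\in B$, so it preserves this multiset exactly; hence any two pairs in the same $2^*$-orbital of $I_2\Wr B$ yield identical multisets, and the same must then hold for $\phi$. Set $T=\{w\in W:\alpha_w\neq\mathrm{id}\}$ and assume for contradiction that $T\neq\emptyset$. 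Fix $w_0\in W$, take $f\equiv 0$ and $g=e_{w_0}$, the indicator of $\{w_0\}$; a direct computation from~$(\ref{eq:vw})$ gives $f\phi=\chi_T$ and $g\phi=\chi_T\oplus e_{w_0\beta^{-1}}$, where $\oplus$ denotes the pointwise flip. The multiset of $\{f,g\}$ has $|W|-1$ copies of ``both $0$'' and one ``differ'', while the multiset of $\{f\phi,g\phi\}$ contains $|T\setminus\{w_0\beta^{-1}\}|$ copies of ``both $1$''. Matching these multisets forces $T\subseteq\{w_0\beta^{-1}\}$; letting $w_0$ vary over $W$ (with $\beta$ fixed) together with the standing assumption $|W|\geq 2$ forces $T=\emptyset$, completing the inclusion.

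For the ``in particular'' clause, $B\in GR$ gives $\overline{B}=B$, so the inclusion just proved specializes to $\overline{I_2\Wr B}\subseteq I_2\Wr B$; combined with the trivial $I_2\Wr B\subseteq\overline{I_2\Wr B}$ we get $I_2\Wr B=\overline{I_2\Wr B}$, i.e., $I_2\Wr B\in GR$. The main obstacle I anticipate is isolating the right invariant: the multiset of local configurations exploits the rigidity of $I_2$ (no value-swap is possible inside the wreath) and fails for general $A\in GR$, which is exactly why the $A=I_2$ case must be handled separately from Lemma~\ref{l:trans}.
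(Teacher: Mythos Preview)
Your argument is correct and follows essentially the same route as the paper: both use Lemma~\ref{l:awrb} to reduce to showing that every $\alpha_w$ is the identity, and both detect a stray transposition by feeding in the edge $\{0,e_{w_0}\}$ and observing that its image acquires a ``both~$1$'' coordinate, which is incompatible with the original $2^*$-orbital. The only cosmetic difference is packaging: you phrase the invariant as the full multiset of local configurations, while the paper isolates just the single colour class $R=\{\{0,e_w\}:w\in W\}$; in the end both arguments compare the same edge to its image and draw the same contradiction.
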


\begin{proof}
The elements of $I_2\Wr B$ are functions from $W$ to a $2$-element set $V$, say $V=\{0,1\}$. Let $R$ be the class of the edges $\{f,g\}$ in $G=G^*(I_2\Wr B)$, for which $f(w)=g(w) = 0$ except for one point $w$, in which $f(w)\neq g(w)$. We note that $R$ is invariant with regard to elements of $I_2\Wr B$. This means that, in $G^*(I_2\Wr B)$, the edges in $R$ have a color different from that of any edge not in $R$. By Lemma~\ref{l:awrb}, the elements $\phi$ of $Aut(G)$ have the form (\ref{eq:vw}). 
If any $\alpha_w$ in the definition of $\phi$ was a transposition, then for any element of $R$ with $f(w)=g(w)=0$ we would have $(f\phi)(w)=(g\phi)(w)=1$, which means that $\phi$ does not preserve the colors of edges, and contradict the fact $\phi\in Aut(G)$. Hence, $\clo{I_2\Wr B} \subseteq I_2 \Wr \clo{B}$, as required.  

For the second statement we use the fact that $B\in GR$ if and only if $B=\clo{B}$.
\end{proof}

We generalize the above lemma for the whole class in question.

\begin{Lemma} \label{l:nto}
Let $A \in DGR$. If no permutation $\alpha$ transposes orbitals of $A$ or $A\in GR\cup\{I_2\}$, then $\clo{A \Wr B} \subseteq A \Wr \clo{B}$. In particular, $\clo{A \Wr B} = A \Wr B'$ for some permutation group $B'$ with $B \subseteq B' \subseteq \clo{B}$.
\end{Lemma}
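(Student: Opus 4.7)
The proof splits according to the three branches of the hypothesis.

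The two easy branches are handled first: if $A \in GR$, then $\clo{A}=A$, so Lemma~\ref{l:awrb} immediately gives $\clo{A\Wr B} \subseteq \clo{A}\Wr\clo{B} = A\Wr\clo{B}$; if $A = I_2$, we invoke Lemma~\ref{l:i2} directly.

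The substantive case is when no permutation of $V$ transposes orbitals of $A$. Let $\phi \in \clo{A\Wr B}$; by Lemma~\ref{l:awrb}, $\phi$ can be written in the form (\ref{eq:vw}) with $\beta \in \clo{B}$ and each $\alpha_w \in \clo{A}$. The goal is to show $\alpha_w \in A$ for every $w \in W$. Assume, for contradiction, that some $\alpha_{w_0} \notin A$. Since $A \in DGR$, the permutation $\alpha_{w_0}$ preserves $2^*$-orbitals but fails to preserve some $2$-orbital, so it transposes at least one non-self-paired orbital. I plan to derive the contradiction by analysing how $\phi$ acts on $R_2$ edges --- pairs $\{f,g\}$ differing at exactly two points. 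For any two non-self-paired orbitals $Q_1,Q_2$ of $A$ lying in distinct $2^*$-orbitals, construct $\{f,g\}$ whose differing positions carry orbital data $(Q_1,Q_2)$ and whose other coordinates all equal a single constant $c \in V$. The image $\{f\phi, g\phi\}$ has orbital data $(Q_1\alpha_{t_1}, Q_2\alpha_{t_2})$ at positions $t_i = s_i\beta^{-1}$, and must lie in the same $(A\Wr B)$-orbit as $\{f,g\}$ because $\phi$ preserves colors. Matching the two edges modulo the three available equivalences --- the $B$-action on positions (consistent because $\beta \in \clo{B}$ preserves $2^*$-orbitals on $W$), the $A$-action on the constant (available by Lemma~\ref{l:orbits}), and the swap $f \leftrightarrow g$, which simultaneously pairs every orbital --- forces exactly one of two alternatives: either $\alpha_{t_1}$ fixes $Q_1$ and $\alpha_{t_2}$ fixes $Q_2$, or $\alpha_{t_1}$ transposes $Q_1$ and $\alpha_{t_2}$ transposes $Q_2$. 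The ``mixed'' match is ruled out precisely because $Q_1,Q_2$ sit in distinct $2^*$-orbitals.

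Varying $Q_1,Q_2$ and the positions $t_1,t_2 \in W$, this type-matching propagates: provided $A$ has at least three non-self-paired $2^*$-orbitals, each $\alpha_w$ must act with a uniform type on every non-self-paired orbital --- either it fixes each of them (whence $\alpha_w \in A$, since $A \in DGR$) or it transposes each of them (whence $\alpha_w$ transposes orbitals of $A$, contradicting the hypothesis). Since $\alpha_{w_0} \notin A$, the latter alternative applies, yielding the contradiction. The boundary cases are resolved by a short side argument: with at most one non-self-paired $2^*$-orbital the hypothesis already forces $\clo{A}=A$, i.e., $A \in GR$, already covered; with exactly two, the type-matching between any two positions $w_0, w_1$ forces $\alpha_{w_1}$ into the ``complementary'' partial-transposition pattern (fixing the $2^*$-orbital that $\alpha_{w_0}$ transposes, and transposing the one that $\alpha_{w_0}$ fixes), so the composition $\alpha_{w_0}\alpha_{w_1}$ transposes every non-self-paired orbital, again contradicting the hypothesis. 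For the ``in particular'' clause, once $\clo{A\Wr B} \subseteq A\Wr\clo{B}$ is known, $B' = \{\beta : (\beta,(\alpha_w)) \in \clo{A\Wr B}\}$ is a subgroup of $\clo{B}$ containing $B$, and composing any representative $(\beta,(\alpha_w^{*})) \in \clo{A\Wr B}$ with arbitrary $(id,(\gamma_w)) \in A\Wr B$ produces every element of $A\Wr B'$.

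The chief obstacle I expect is the detailed orbit analysis of $R_2$ edges: one has to juggle the three simultaneous equivalences (positional $B$-action, constant $A$-action, and orientation swap) and verify that the two-orbital type-matching propagates to full uniformity of the $\alpha_w$'s across all non-self-paired orbitals. Using a single constant value at all passive coordinates, together with orbitals chosen in distinct $2^*$-orbitals, is the device that isolates the purely orbital constraint and makes the propagation step clean.
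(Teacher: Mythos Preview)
Your proof is correct, but it is considerably more elaborate than the paper's. Both arguments analyse $R_2$-edges, yet they extract the contradiction differently.

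You establish a both-fix/both-transpose dichotomy at the two active coordinates by insisting that $Q_1,Q_2$ lie in \emph{distinct} $2^*$-orbitals, then propagate this constraint across coordinates and orbitals (needing at least three non-self-paired $2^*$-orbitals for the chain to close), and finally dispose of the cases of $0,1,2$ such orbitals by separate side arguments. The paper avoids all of this with two moves. First, it reduces to $B=S_W$: since $A\Wr B\subseteq A\Wr S_W$ one has $\clo{A\Wr B}\subseteq\clo{A\Wr S_W}$, so it suffices to prove $\clo{A\Wr S_W}\subseteq A\Wr S_W$; combined with Lemma~\ref{l:awrb} (which independently forces $\beta\in\clo{B}$) this yields $\clo{A\Wr B}\subseteq A\Wr\clo{B}$, and working over $S_W$ lets one normalise to $\beta=id$. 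Second---and this is the main simplification---the paper does \emph{not} require the two orbitals to lie in different $2^*$-orbitals. It simply takes an orbital $O$ that $\alpha_{w_1}$ transposes (available since $\alpha_{w_1}\in\clo{A}\setminus A$ and $A\in DGR$) and, at an arbitrary second coordinate $w_2$, any non-self-paired orbital $Q$ that $\alpha_{w_2}$ \emph{fixes} (available because, by hypothesis, $\alpha_{w_2}$ cannot transpose \emph{all} orbitals). A single $R_2$-edge carrying $(O,Q)$ at $(w_1,w_2)$ then gives an immediate contradiction: whichever combination of the swap $f\leftrightarrow g$ and the positional swap $w_1\leftrightarrow w_2$ an element $\psi\in A\Wr S_W$ uses to match the image, some $\alpha'_w\in A$ is forced to send a non-self-paired orbital to its pair. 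No propagation, no case split on the number of non-self-paired $2^*$-orbitals. The mixing trick---pair one ``bad'' coordinate directly against one ``good'' one---is worth adding to your repertoire.
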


\begin{proof}
We prove the first claim.
For $A=I_2$ the claim is already proved in the lemma above. If $G\in GR$, then the claim is immediate by Lemma~\ref{l:awrb}. So we may assume that no permutation $\alpha$ transposes orbitals of $A$.

Let $G=G^*(A \Wr S_W)$.
In view of Lemma~\ref{l:awrb}, it is enough to show that $Aut(G) \subseteq A \Wr S_W$.
By the same lemma, we know that the elements $\phi\in Aut(G)$ are of the form (\ref{eq:vw}). Assume to the contrary that for some $\phi\in Aut(G)$, we have $\alpha_{w_1}\notin A$ for some $w_1\in W$. Again, by Lemma~\ref{l:awrb}, we have that $\alpha_{w_1}\in \clo{A}$.

We may assume also that $\beta$ in $\phi$ is the identity, that is, $\beta=id$. This is so, since any permutation of the form (\ref{eq:vw}) with all $\alpha_w$ equal to the identity is in $A\Wr S_W$, and thus in $Aut(G)$, and one may compose $\phi$ with a suitable permutation of this form to get $id$ in place of~$\beta$. 

Denote $\alpha_1=\alpha_{w_1}$. Since $\alpha_{1}\in \clo{A}\setminus A$,  there exist paired orbitals $O, O'$ of $A$, $O\neq O'$, and $(x,y) \in O$ such that  $(x\alpha_{1},y\alpha_{1})\in O'$. Since $(y,x)\in O'$ there exists $\alpha'\in A$ such that 
 $(x\alpha_{1}\alpha',y\alpha_{1}\alpha') = (y,x)$.  Replacing $\alpha_{1}$ by $\alpha_{1}\alpha'$ we may assume that  $(x\alpha_{1},y\alpha_{1}) = (y,x)$. Then we still have $\phi\in Aut(G)$ and $\alpha_{w_1}=\alpha_1\notin A$.
 
 In turn, fix $w_2\in W$ such that $w_2\neq w_1$. Denote $\alpha_2=\alpha_{w_2}$. Since $\alpha_{2}\in Aut(G)$,  there exist paired orbitals $Q, Q'$ of $A$, $Q\neq Q'$, and $(s,t) \in Q$ such that  $(s\alpha_{2},t\alpha_{2})\in Q$. {Otherwise, $\alpha_{2}$ transposes orbitals of $A$, contrary to the assumption.} Now,  since $(s,t)\in Q$ there exists $\alpha''\in A$ such that 
 $(s\alpha_{2}\alpha'',t\alpha_{2}\alpha'') = (s,t)$.  Again, replacing $\alpha_{2}$ by $\alpha_{2}\alpha''$ we may assume that  $(s\alpha_{2},t\alpha_{2}) = (s,t)$. The $\phi$ modified in such a way still belongs to $Aut(G)$.

\begin{figure}\label{fig3}
\begin{tikzpicture}[auto,inner sep=1pt, 
minimum size=3pt, xscale=1.4]

  
\draw[-] (1,0)--(7,0);

\draw[-] (1,0)--(1,3.5);
\draw[-] (2,0)--(2,3.5);
\draw[-] (3,0)--(3,3.5);
\draw[-] (4,0)--(4,3.5);
\draw[-] (5,0)--(5,3.5);
\draw[-] (6,0)--(6,3.5);
\draw[-] (7,0)--(7,3.5);

\draw[-] (2,2)--(3,2.2) ; 
\draw[-] (2,1)--(3,1.4) ; 

\draw[-, dashed,semithick] (2,2)--(3,1.4) ; 
\draw[-, dashed,semithick] (2,1)--(3,2.2) ; 

\draw[-] (3,2.2)--(4,1);
\draw[-] (3,1.4)--(4,1);

\draw[-,dashed, semithick] (3,2.2)--(4,2.5);
\draw[-, dashed, semithick] (3,1.4)--(4,2.5);

\draw[-,thick] (4,1)--(5,1) ;
\draw[-,thick] (5,1)--(6,1) ;
\draw[-,thick] (6,1)--(7,1) ;

\draw (2,2) node[circle,fill=black,draw]{};
\draw (2,1) node[circle,fill=black,draw]{};
 
\draw (3,2.2) node[circle,fill=black,draw]{};
\draw (3,1.4) node[circle,fill=black,draw]{};

\draw (4,2.5) node[circle,fill=black,draw]{};
\draw (4,1) node[circle,fill=black,draw]{};\draw (5,3) node[circle,fill=black,draw]{};\draw (5,1) node[circle,fill=black,draw]{}; \draw[-, dashed, thick] (4,2.5)--(5,3);

\draw (6,2) node[circle,fill=black,draw]{};
\draw (6,1) node[circle,fill=black,draw]{};\draw (7,1.5) node[circle,fill=black,draw]{};
\draw (7,1) node[circle,fill=black,draw]{};
 \draw[-, dashed, thick] (5,3)--(6,2);
 \draw[-, dashed, thick] (6,2)--(7,1.5);

\draw (0.7,3.5) node {$V$};

\draw (3.5,2.6) node {$f'$};
\draw (3.8,2) node {$g'$};
\draw (3.84,1.5) node {$f$};
\draw (3.5,1) node {$g$};
\draw (7.3,0) node {$W$};

\draw (1.8,2) node {$x$};
\draw (1.8,1) node {$y$};
\draw (2.86,2.35) node {$s$};
\draw (2.88,1.14) node {$t$};

\draw (2,-0.4) node {$w_1$};
\draw (3,-0.4) node {$w_2$};
  
  \end{tikzpicture}
\caption{The edge $\{f,g\}$ in the proof of Lemma~\ref{l:nto} (solid lines) and its image $\{f,g\}\phi = \{f',g'\}$ (dashed lines).}
\end{figure}
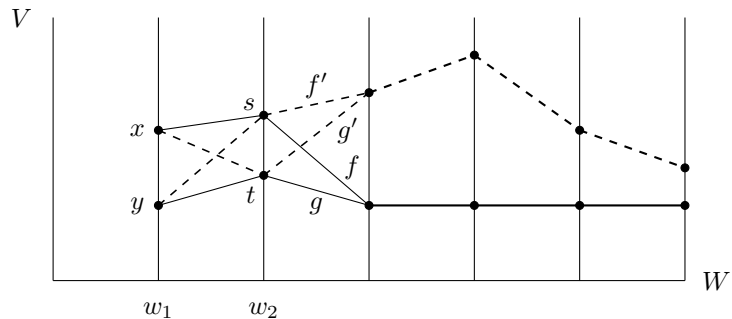

Now, we define two edges in $G$. For the first edge $\{f,g\}$ we put  $f(w_1) = x$, $f(w_2)=s$, $g(w_1)=y$, $g(w_2)=t$ and $f(w) = g(w) = v_0$, otherwise, for some fixed $v_0\in V$. For the second edge $\{f',g'\}$ we put  $f'(w_1) = x$, $f'(w_2)=t$, $g'(w_1)=y$, $g'(w_2)=s$ and $f'(w) = g'(w) = v_0\phi$, otherwise (see Figure~3). This and properties of $\alpha_{1}$ and $\alpha_{2}$ guarantee that $\{f,g\}\phi = \{f',g'\}$. Since $\phi\in Aut(G)$, the edges have the same color. 

Consequently, there exists a permutation $\psi\in A\Wr S_W$ such that $\{f,g\}\psi = \{f',g'\}$. We show that this leads to a contradiction. Let $(\beta', (\alpha'_w)_{w\in W})$ be the pair defining $\psi$ as in (\ref{eq:vw}).  Then we have either $f\psi = f'$ and $g\psi = g'$ or $f\psi = g'$ and $g\psi = f'$. In the first case we have $(x\alpha_1,s\alpha_2) = (x,t)$ and $(y\alpha_1,t\alpha_2) = (y,s)$. This implies $(s,t)\alpha_2 = (t,s)$, which contradicts the fact that $Q$ is not self-paired. In the second case we have $(x\alpha_1,s\alpha_2) = (y,s)$ and $(y\alpha_1,t\alpha_2) = (x,t)$. This implies $(x,y)\alpha_1 = (y,x)$, which contradicts the fact that $O$ is not self-paired. 
This completes the proof.

This proves the first statement. Combining it with Lemma~\ref{l:awrb},  we obtain $Aut(G) \subseteq A \Wr \clo{B}$, and since the set of $\beta$ such that there exists $\phi \in Aut(G)$ determined by (\ref{eq:vw}) forms a subgroup $B'$ of $\clo{B}$, we obtain $Aut(G) \subseteq (A \Wr B')$. 

Now, for any $\beta'\in B'$, there exists  $\phi\in Aut(G)$  of the form 
(\ref{eq:vw}) with $\beta=\beta'$ and  $\alpha_w\in A$ (since $Aut(G) \subseteq A \Wr \clo{B}$). It follows that the permutation $\phi'$ 
with $\beta=\beta'$ and all $\alpha_w=id$ is in $Aut(G)$ (since $\phi''$ with $\beta'$ replaced by $id$ is in $Aut(G)$). Consequently, any permutation of the form (\ref{eq:vw}) with all $\alpha_w\in A$ and $\beta\in B'$ belongs to $Aut(G)$. This implies that $\clo{A \Wr B} = A \Wr B'$, proving the second statement.
\end{proof}


A simple consequence of the lemma above, is that if in addition $B\in GR$, then $A \Wr B \in GR$. We show that, using this lemma, $GR$ in the condition $B\in GR$ may be replaced by a larger class $BGR$  of the automorphism groups of colored hypergraphs.

Formally, by a \emph{colored hypergraph} $H$ we mean a pair $H = (V,E)$, where $V$ is the set of vertices of $H$, with $|V|>1$, and $E$ the \emph{color function} from the set $P(V)$ of nonempty subsets of $V$ into the set of colors $\{ 0, \ldots, k-1\}$. 
An automorphism of a colored hypergraph $H=(V,E)$ is a permutation $\sigma$ of the set $V$
such that its extension on $P(V)$ preserves the colors of subsets. The automorphisms of $H$ form a permutation group on $V$, which is denoted by $Aut(H)$. In case of finite $V$, it may be visualized as the automorphism group of a colored symplex.

This group can be expressed also in terms of the  the invariant groups of systems of unordered relations (cf.~\cite{SV}) or the symmetry groups of $k$-valued boolean functions (cf.~\cite{kis1}). Our notation $BGR$ follows  \cite{GK1} and refers to the term ,,Boolean'' (in~\cite{SV} they are called \emph{orbit closed} groups).

The lemma below interestingly connects the classes $BGR$ and $GR$, studied separately so far.

\begin{Lemma} \label{BGR}
Let $A \in DGR$. If there is no permutation transposing orbitals of $A$ or $A\in GR\cup\{I_2\}$, then   
$B \in BGR$ implies that $A \Wr B \in GR$.  
\end{Lemma}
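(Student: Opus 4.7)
The plan is to build, from a hypergraph realizing $B$, a colored graph $G$ on the vertex set $V^W$ whose automorphism group is exactly $A \Wr B$. Since $B \in BGR$, fix a colored hypergraph $H = (W, E_H)$ with $Aut(H) = B$. Since $S_W \in GR$ (it is the automorphism group of the monochromatic complete graph on $W$), we have $\clo{S_W} = S_W$, and Lemma~\ref{l:nto} applied with $B$ replaced by $S_W$ yields $\clo{A \Wr S_W} = A \Wr B''$ with $S_W \subseteq B'' \subseteq \clo{S_W} = S_W$, forcing $B'' = S_W$. Hence $A \Wr S_W \in GR$; let $G_1$ be a colored graph (say $G_1 = G^*(A \Wr S_W)$) with $Aut(G_1) = A \Wr S_W$.

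Next, for an edge $\{f,g\}$ of the complete graph on $V^W$, define its \emph{support} to be the subset $S(f,g) = \{w \in W : f(w) \neq g(w)\}$ of $W$, which is nonempty and independent of the order of $f,g$. Refine the coloring of $G_1$ into a new colored graph $G$ on $V^W$ by assigning to $\{f,g\}$ the compound color $\bigl(E_{G_1}\{f,g\},\, E_H(S(f,g))\bigr)$. The key computational fact is that for any $\phi \in A \Wr S_W$ determined by parameters $\beta \in S_W$ and $(\alpha_w)_{w\in W}$, one has
\[
S(f\phi, g\phi) = S(f,g)\,\beta^{-1},
\]
since $(f\phi)(w) \neq (g\phi)(w)$ is equivalent to $f(w\beta) \neq g(w\beta)$.

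For the inclusion $A \Wr B \subseteq Aut(G)$, any $\phi \in A \Wr B$ belongs to $A \Wr S_W = Aut(G_1)$, so it preserves the first coordinate of the color; and since its $W$-component $\beta$ lies in $B = Aut(H)$, the above support formula gives $E_H(S(f\phi,g\phi)) = E_H(S(f,g)\beta^{-1}) = E_H(S(f,g))$, so the second coordinate is also preserved. For the reverse inclusion, any $\phi \in Aut(G)$ lies in $Aut(G_1) = A \Wr S_W$, so is parametrized by some $\beta \in S_W$ and $(\alpha_w)$; preservation of the second coordinate yields $E_H(S\beta^{-1}) = E_H(S)$ for every nonempty $S \subseteq W$ (every such $S$ arises as a support, because $|V| \geq 2$ lets us pick $f,g$ differing exactly on $S$), whence $\beta \in Aut(H) = B$ and $\phi \in A \Wr B$.

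The main point to verify carefully is that supports exhaust all nonempty subsets of $W$ and transform correctly under the product action; once that is pinned down, the refinement automatically cuts $Aut(G_1) = A \Wr S_W$ down to $A \Wr B$. Thus $A \Wr B = Aut(G) \in GR$, as required.
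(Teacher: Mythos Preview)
Your proof is correct, but it takes a genuinely different route from the paper's.

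The paper works directly with the orbital graph $G^*(A\Wr B)$: it invokes Lemma~\ref{l:nto} for the given $B$ to write $\clo{A\Wr B}=A\Wr B'$ with $B\subseteq B'\subseteq\clo{B}$, and then rules out any $\gamma\in B'\setminus B$ by a contradiction argument using carefully chosen test functions $f,f_X,f_Y$ (constant functions modified on a subset $X$ witnessing $\gamma\notin Aut(H)$). In particular, the paper proves the slightly sharper fact that $A\Wr B$ is itself $2^*$-closed, i.e.\ already equal to $Aut(G^*(A\Wr B))$.

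You instead apply Lemma~\ref{l:nto} only in the trivial case $B=S_W$ to secure $A\Wr S_W\in GR$, and then build an explicit refinement of $G^*(A\Wr S_W)$ by tagging each edge with the hypergraph color of its support $S(f,g)$. The support identity $S(f\phi,g\phi)=S(f,g)\beta^{-1}$ converts edge-color preservation into subset-color preservation in $H$, which pins $\beta$ to $B$. This is clean and constructive: it makes transparent \emph{why} $BGR$ is the right hypothesis (supports realize all nonempty subsets of $W$, and the product action permutes them via $\beta$). What you give up is the identification of the realizing graph with the canonical orbital graph $G^*(A\Wr B)$; what you gain is a direct, non-contradiction argument that avoids the case analysis on $\{f,f_X\}\phi$ in the paper's proof.
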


\begin{proof}
We need to show that  $\clo{A \Wr B} = A \Wr B$. 
By Lemma~\ref{l:nto},  $\clo{A \Wr B} = A \Wr B'$ with $B\subseteq B'$. Thus we need to show that $B'\setminus B$ is empty. 
Assume to the contrary that there exists $\gamma\in B'\setminus B$.

Since $B\in BGR$, there exists a colored hypergraph $H$ with $Aut(H)= B$. Since $\gamma\notin B$, there are two nonempty sets  $X, Y \subseteq V$ whose colors in $H$ are different, and $X\gamma = Y$. 

We define three functions $f,f_X$ and $f_Y$ belonging to $V^W$. Let $v_0\neq v_1$ be two fixed vertices of $V$. We define $f$ by $f(w)=v_0$ for all $w\in W$. For $f_X$ we put $f_X(w)=v_0$ if $w\in X$, and $f_X(w)=v_1$, otherwise. And similarly, $f_Y(w)=v_0$ if $w\in Y$, and $f_Y(w)=v_1$, otherwise.

Consider the edge $e = \{f,f_X\}$ in $G^*(A \Wr B)$. We will use the permutation $\phi_\gamma$ given by (\ref{eq:vw}) with $\beta=\gamma$ and $\alpha_w = id$ for all $w\in W$.
Since $\gamma\in B'$, $\phi_\gamma\in \clo{A \Wr B}$. Consequently, the edge $e\phi_\gamma$ has the same color  in $G^*(A \Wr B)$ as $e$. Therefore, there exist a permutation $\phi\in A\Wr B$ of the form (\ref{eq:vw}) such that $e\phi_\gamma = e\phi$.

Note that, by definition, $e\phi_\gamma = \{f,f_X\}\phi_\gamma = \{f,f_Y\}$. Hence, either $f\phi = f$ and $f_X\phi = f_Y$ or $f\phi = f_Y$ and $f_X\phi = f$. 

In the first case, it follows (from the first equation) that $w_0\alpha_w=w_0$ for all $w\in W$, and then (from the second equation) that $X\beta=Y$. This contradicts the fact that  $X$ and $Y$ have different colors in $H$ (since $\beta\in Aut(H)$). The second case is possible only in the case when $X\beta = Y$, which gives the same contradiction and completes the proof.  
\end{proof}

There is at least one case when the condition $B\in BGR $ in the lemma above is necessary.

\begin{Proposition}\label{p:sv}
The product $S_V \Wr B \in GR$ if and only if $B \in BGR$.
\end{Proposition}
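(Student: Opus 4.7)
The ``if'' direction is immediate: $S_V$ lies in $GR \cap DGR$ (it is the automorphism group of the monochromatic colored graph, respectively digraph, on $V$), so Lemma~\ref{BGR} applies and gives $S_V \Wr B \in GR$ whenever $B \in BGR$.

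For the ``only if'' direction I would argue the contrapositive: assume $B \notin BGR$ and build an extra automorphism for $G = G^*(S_V\Wr B)$. By definition of $BGR$, there is a permutation $\gamma$ of $W$, not in $B$, that preserves every $B$-orbit on the set $P(W)$ of nonempty subsets of $W$. Taking $\beta = \gamma$ and $\alpha_w = id$ in formula~(\ref{eq:vw}) produces a candidate $\phi$, which is not in $S_V\Wr B$ by Lemma~\ref{l:fnotin}. It then suffices to check that $\phi$ preserves all colors of $G$, since this witnesses $\clo{S_V\Wr B}\supsetneq S_V\Wr B$.

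The crux is to identify the color classes of $G^*(S_V\Wr B)$ explicitly. Because $S_V$ is the full symmetric group on $V$, the only information about a pair $(f(w),g(w))$ that survives after applying an arbitrary $\alpha_w\in S_V$ in each fibre is whether $f(w)=g(w)$ or $f(w)\neq g(w)$. I would therefore prove the auxiliary fact that two edges $\{f_1,g_1\}$ and $\{f_2,g_2\}$ of $G$ share a color if and only if their difference sets $X_{f_i,g_i}:=\{w\in W:f_i(w)\neq g_i(w)\}$ lie in the same $B$-orbit on $P(W)$. The nontrivial (sufficient) direction is an elementary fibre-wise construction: once a $\beta\in B$ with $X_{f_2,g_2}=X_{f_1,g_1}\beta^{-1}$ is chosen and one commits to sending $f_1$ to $f_2$ (rather than to $g_2$), each $\alpha_w$ is obtained by extending a prescribed map on a one- or two-point set of $V$ to a permutation in $S_V$.

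Once this color-class description is established, closing the argument is routine. A direct calculation from formula~(\ref{eq:vw}) gives $X_{f\phi,g\phi}=X_{f,g}\gamma^{-1}$, which lies in the same $B$-orbit as $X_{f,g}$ by the choice of $\gamma$; hence $\phi$ preserves colors of $G$ and thus belongs to $Aut(G)\setminus(S_V\Wr B)$, witnessing $S_V\Wr B\notin GR$. The main obstacle is the auxiliary color-class lemma, in particular handling the unordered nature of edges and the fibres where $f(w)=g(w)$; once that is in place the rest falls out of the Galois-connection machinery already set up in the paper.
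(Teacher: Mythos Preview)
Your proposal is correct and follows essentially the same route as the paper: both arguments identify the $2^*$-orbitals of $S_V\Wr B$ as the classes of edges whose difference sets $X_{f,g}=\{w:f(w)\neq g(w)\}$ lie in a common $B$-orbit on subsets of $W$, and both then take a $\gamma\in S_W\setminus B$ preserving these orbits (with $\alpha_w=id$) to produce the extra automorphism witnessing $S_V\Wr B\notin GR$. You are simply more explicit about the auxiliary color-class lemma and the fibre-wise construction, which the paper compresses into a one-line assertion.
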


\begin{proof}
The ``if'' part is proved in Lemma~\ref{BGR}. For the ``only if'' part, we assume, to the contrary that $B\notin BGR$ and $\clo{S_V\Wr B} = S_V \Wr B$. 

Given a subset $X\subseteq W$, let $R_X$ be the class of all edges $\{f,g\}$ in $G^*(S_V\Wr B)$ such that $f$ and $g$ differ at exactly the points of $X$. Obviously, all edges in this class are in the same $2^*$-orbital of $S_V\Wr B$. In fact, the 
$2^*$-orbitals of $S_V\Wr B$ are the unions of classes $R_X$ such that $R_X$ and $R_Y$ are in the same $2^*$-orbital if and only if $Y=X\beta$ for some $\beta\in B$. Consequently, any permutation $\phi$ of the form (\ref{eq:vw}) with $\beta$ preserving the orbits of $B$ on the subsets of $W$ belongs to $\clo{S_V\Wr B}$. 
Since $B \notin BGR$, there is $\beta \in S_V \setminus B$ that preserves all orbits of $B$ in this action. Hence, the corresponding $\phi \in  \clo{S_V \Wr B}$, but $\phi\notin S_V \Wr B$, which contradicts our assumption.  
\end{proof}

In general, a sufficient condition for $A\Wr B$ to belong to $GR$ is for $A$ to have a large enough the number of orbitals comparing with the cardinality of $W$. Let us recall that for a transitive group $A$ the rank of $A$, denoted $rank(A)$, is the number of orbits in the stabilizer of any point in $A$, which is equal to the number of orbitals. The latter allows to generalize this notion to intransitive groups. Thus, in general, by $rank(A)$ we denote the number of orbitals of $A$. Note, that this is equal to the number of nontrivial orbitals plus the number orbits of $A$.

\begin{Lemma}\label{l:oo}
Let $B \subseteq S_n$, $A \in DGR$, and assume that either no permutation transposes orbitals of $A$ or $A\in GR\cup\{I_2\}$.  If  $rank(A) \geq  n+1$ or $rank(A)=n$ and all orbitals of $A$ are self-paired, then $A \Wr B \in GR$.

\end{Lemma}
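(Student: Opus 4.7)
The plan is to apply Lemma~\ref{l:nto} to obtain $\clo{A \Wr B} = A \Wr B'$ for some $B \subseteq B' \subseteq \clo{B}$, and then use the rank hypothesis to force $B' = B$. Suppose, for contradiction, that $\gamma \in B' \setminus B$, and let $\phi_\gamma$ denote the permutation of $V^W$ given by $(\ref{eq:vw})$ with $\beta = \gamma$ and all $\alpha_w = id$. Then $\phi_\gamma \in Aut(G^*(A \Wr B))$, so every edge $\{f,g\}$ and its image $\{f,g\}\phi_\gamma$ must lie in a common $2^*$-orbital of $A \Wr B$. The aim is to exhibit a single edge violating this.

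Assign to each ordered pair $(f,g) \in V^W \times V^W$ its \emph{orbital function} $\lambda_{f,g}: W \to \mathcal{O}$, where $\mathcal{O}$ is the set of $2$-orbitals of $A$ and $\lambda_{f,g}(w)$ is the orbital containing $(f(w),g(w))$. Since each $\alpha_w \in A$ preserves orbitals, a direct calculation gives $\lambda_{f\phi, g\phi}(w) = \lambda_{f,g}(w\beta)$ for any $\phi \in A \Wr B$ with parameter $\beta$, while $\lambda_{f\phi_\gamma, g\phi_\gamma}(w) = \lambda_{f,g}(w\gamma)$. Consequently, if $\{f,g\}\phi_\gamma = \{f,g\}\phi$ for some $\phi \in A\Wr B$ with parameter $\beta \in B$, then for every $w \in W$ either
\begin{enumerate}
\item[\textup{(I)}] $\lambda_{f,g}(w\beta) = \lambda_{f,g}(w\gamma)$, \quad or
\item[\textup{(II)}] $\lambda_{f,g}(w\beta) = \lambda_{f,g}(w\gamma)^*$,
\end{enumerate}
where $O^*$ denotes the orbital paired with $O$. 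The strategy is to choose an injective $\lambda: W \to \mathcal{O}$ so that neither alternative admits a solution $\beta \in B$, then realize $\lambda$ as $\lambda_{f,g}$ by picking $(f(w),g(w)) \in \lambda(w)$, taking care that at least one $\lambda(w)$ is nontrivial so that $f \neq g$ (the hypothesis $|V|>1$ together with the rank constraint always leaves a nontrivial orbital available).

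Injectivity of $\lambda$ kills (I) at once: it forces $w\beta = w\gamma$ for all $w$, hence $\beta = \gamma \notin B$. The burden of the proof is (II), and this is precisely where the rank alternatives enter. If $rank(A) = n$ with every orbital self-paired, take any bijection $\lambda : W \to \mathcal{O}$: then $\lambda(w\gamma)^* = \lambda(w\gamma)$, so (II) collapses into (I). If $rank(A) \geq n+1$ and every orbital is self-paired, the same device works using any injection. In the remaining subcase $rank(A) \geq n+1$ with some non-self-paired orbital $O$, choose an injection $\lambda$ whose image contains $O$ but omits $O^*$ --- feasible because one is filling $n$ slots out of at least $n+1$ orbitals, leaving room to discard $O^*$. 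Pick $w_1 \in W$ with $\lambda(w_1\gamma) = O$; then the right-hand side of (II) at $w=w_1$ equals $O^* \notin \mathrm{im}(\lambda)$, so no $\beta$ can satisfy it. The main obstacle is exactly the bookkeeping around (II) induced by working with unordered edges; the two rank conditions are calibrated precisely to defeat this pairing ambiguity, either by trivializing the pairing or by arranging that $\mathrm{im}(\lambda)$ is not closed under $*$.
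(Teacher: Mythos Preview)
Your argument is correct and is essentially the paper's proof repackaged: your injective orbital function $\lambda_{f,g}$ is exactly the data encoded by the paper's special edge set $E$ (conditions (a) and (b)), and your dichotomy (I)/(II) corresponds to the paper's Cases (i)/(ii), with the same device of omitting $O^*$ from the image when $rank(A)\geq n+1$. One small wording issue: the alternative (I) vs.\ (II) holds \emph{globally} (depending on whether $f\phi_\gamma=f\phi$ or $f\phi_\gamma=g\phi$), not pointwise in $w$; your subsequent use is correct, but the phrase ``for every $w\in W$ either (I) or (II)'' should be tightened to make this explicit.
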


\begin{proof}
By Lemma~\ref{l:nto}, $\clo{A \Wr B} = A \Wr B'$ for some $B' \supseteq B$. We need to show that $B'=B$. 

Our general idea is to define a set $E$ of edges $e=\{f,g\}$ in $G=G^*(A \Wr B)$, which is regular (in a sense of \cite{SV}) with regard to action of $S_n$ on $W$. This would imply that the only permutations in $S_n$ preserving the colors of the edges in $E$ are those corresponding to action of $B$.

Later in this proof we use notation  $\beta = \phi[W]$ to specify the permutation of $W$ defining the permutation $\phi$ of $V^W$ by the formula (\ref{eq:vw}).

Let $E$ be the set of edges $e=\{f,g\}$ in in $G=G^*(A \Wr B)$ satisfying the following two conditions:

\begin{enumerate}
\item[(a)] there are $k$ points $w_i\in W$ at which $f(w_i)=g(w_i)$, and 
if $i\neq j$, then $f(w_i)$ and $f(w_j)$ are in different orbits of $A$,

\item[(b)] there are $n-k$ points $u_i\in W$ at which $f(u_i)\neq g(u_i)$, and if 
$i\neq j$, then $(f(u_i),g(u_i))$ and $(f(u_j),g(u_j))$ are in different (nontrivial) orbitals of $A$. 
\end{enumerate}
In case when $k\geq n$ we assume that only conditions (a) holds and only for $n$ points. Since  $rank(A) \geq n$, the set $E$ is nonempty. 

A crucial property is that all $f(w_i)$ in (a) are in different orbits of $A$, and all pairs $(f(u_i),g(u_i))$ in (b) are in different orbitals of $A$. By this property, it follows that if $\phi \in A \Wr B$  and $e\in E$, then $e\phi \in E$. We establish the properties of those permutations $\phi\in A \Wr S_n$ that fix edges in $E$.

Let $e\in E$ and $e=e\phi$ for some $\phi\in A \Wr S_n$.
As $\{f,g\}\phi = \{f,g\}$, we have either 

\emph{Case} (i): $f\phi = f$ and $g\phi=g$,  or 

\emph{Case} (ii): $f\phi = g$ and $g\phi=f$.

In Case (i), by the crucial property mentioned above, $\beta$ must fix all $w\in W$, that is $\beta=id$ is the identity. 

In Case (ii), $\beta$ fixes all $w_i$ of (a) and all $u_j$ of (b) with the property that $(f(u_i),g(u_i))$ belongs to a self-paired orbital (then $\alpha_{u_i}$ in $\phi$ transposes $f(u_i)$ and $g(u_i)$).
If $(f(u_i),g(u_i))$ is in an orbital $O$ of $A$ that is not self-paired,  then $\beta$ must transpose $u_i$ and $u_j$, where 
$(f(u_j),g(u_j))$ is in the orbital of $A$ paired with $O$. This case is possible only if for every $O$ represented by a pair $(f(u_i),g(u_i))$ in (b), also the orbital $O'$ paired with $O$ is represented in (b) by a pair $(f(u_j),g(u_j))$. Then, $\beta$ is the product of transpositions of pairs $(u_i,u_j)$ corresponding to pairs of non-self-paired orbitals represented in (b). If all orbitals of $A$ are self-paired, we have again, as in Case~(i), that $\beta$ must be the identity.

In case when $rank(A)\geq n+1$ and a non-self-paired orbital $O$ is represented in (b) by a pair $(f(u_i),g(u_i))$, we may assume that we choose points $u_i$ in $E$ in such a way, that there is no pair $(f(u_j),g(u_j))$ in (b) belonging to the orbital $O'$ paired with $O$. Then, Case (ii) is excluded, and it follows that, in any case we consider, $e=e\phi$ implies that $\beta=id$. 

Now, let $e=\{f,g\}$ be a fixed edge of the set $E$ as specified above, and $\phi$ be an arbitrary permutation in $\clo{A\Wr B} = A \Wr B'$. Thus, $\beta=\phi[W]\in B'$. 
All we need to show is that $\beta\in B$. By the assumption, $\phi$ preserves the colors of the edges in $G=G^*(A\Wr B)$. Hence, there exists $\phi'\in A \Wr B$ such that $e\phi = e\phi'$. Let $\beta'= \phi'[W]$. We infer that $e = e\phi'\phi^{-1}$. Since $(\phi'\phi^{-1})[W] = \beta'\beta^{-1}$, by what we have established above, $\beta'\beta^{-1} = id$, and consequently, $\beta=\beta'\in B$, as required.
\end{proof}

We note that the proof holds for arbitrary cardinals $rank(A)$ and $n$, as well.
On the other hand, if $n$ is large enough comparing with $rank(A)$, then  $A\Wr B$ may not belong to $GR$. We settle the case when $B=A_n$ is the alternating group on $n$ elements. This depends on the number of pairs of non-self-paired orbitals in $A$. Denote this number by $nsp(A)$.

\begin{Lemma}\label{l:aaa} Let $A \in DGR$, and  either no permutation transposes orbitals of $A$ or $A\in GR\cup\{I_2\}$. 
If  $rank(A)=n$ and $nsp(A)$ is even, then $A \Wr A_n  \in GR$. 
\end{Lemma}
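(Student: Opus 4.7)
The plan is to adapt the edge-saturation technique of Lemma~\ref{l:oo} to this borderline case $rank(A)=n$, using the parity hypothesis on $nsp(A)$ to substitute for the rank deficit. First, by Lemma~\ref{l:nto} we have $\clo{A\Wr A_n}= A\Wr B'$ for some $A_n\subseteq B'\subseteq S_n$, so it suffices to show $B'\subseteq A_n$. Since $rank(A)=n=t+s$, with $t$ the number of orbits and $s$ the number of nontrivial orbitals of $A$, I would construct an edge $e=\{f,g\}$ in $G=G^*(A\Wr A_n)$ satisfying conditions (a) and (b) of the proof of Lemma~\ref{l:oo} with $k=t$, chosen so that every orbit of $A$ is realised at exactly one coordinate $w_i$ with $f(w_i)=g(w_i)$ and every nontrivial orbital of $A$ is realised at exactly one coordinate $u_j$ with $f(u_j)\ne g(u_j)$. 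The set $E$ of such edges is $A\Wr A_n$-invariant by the same argument as in Lemma~\ref{l:oo}, so $e$ lies in a single $2^*$-orbital of $A\Wr A_n$.

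The core step is the stabiliser analysis inside $A\Wr S_n$. If $\phi\in A\Wr S_n$ fixes $e$, then either (i) $f\phi=f$ and $g\phi=g$, which forces $\beta=\phi[W]=id$ exactly as in Lemma~\ref{l:oo}; or (ii) $f\phi=g$ and $g\phi=f$, in which case $\beta$ must fix every $w_i$ (the orbits $O_i$ are distinct) and every $u_j$ whose orbital is self-paired, while for each of the $nsp(A)$ pairs $(O,O')$ of paired non-self-paired orbitals --- both of which now appear in $e$ because rank saturates --- $\beta$ must transpose the two coordinates $u_j$ and $u_{j'}$ at which $O$ and $O'$ are realised. Thus in Case (ii), $\beta$ is a product of exactly $nsp(A)$ disjoint transpositions, and the parity hypothesis gives $\beta\in A_n$; in Case (i), trivially $\beta=id\in A_n$.

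Finally, the regularity argument from Lemma~\ref{l:oo} transfers almost verbatim: given any $\phi\in A\Wr B'$, the fact that $\phi$ preserves colors of $G$ yields some $\phi'\in A\Wr A_n$ with $e\phi'=e\phi$; then $\phi'\phi^{-1}\in A\Wr S_n$ fixes $e$, so by the stabiliser analysis $(\phi'\phi^{-1})[W]=\beta'\beta^{-1}\in A_n$, and since $\beta'\in A_n$ we get $\beta=\phi[W]\in A_n$, hence $B'=A_n$ as desired. The main obstacle I expect is the Case (ii) analysis: one must carefully rule out any longer cycle structure or extra transpositions in $\beta$ beyond those forced by the orbital pairings, in order to conclude that $\beta$ is \emph{exactly} a product of $nsp(A)$ disjoint transpositions, so that the even parity of $nsp(A)$ is precisely what places $\beta$ in $A_n$.
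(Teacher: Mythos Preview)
Your proposal is correct and follows essentially the same approach as the paper's proof: both continue the argument of Lemma~\ref{l:oo} with $B=A_n$, noting that now Case~(ii) cannot be excluded and yields $\beta'\beta^{-1}$ equal to a product of exactly $nsp(A)$ disjoint transpositions (one for each pair of non-self-paired orbitals, all of which are represented since $rank(A)=n$), which lies in $A_n$ by the parity hypothesis. Your worry about ruling out longer cycle structures in Case~(ii) is already handled by the analysis in Lemma~\ref{l:oo}: since every orbit and every nontrivial orbital appears at exactly one coordinate, the condition $f\phi=g$, $g\phi=f$ forces $\beta$ to fix each $w_i$ and each self-paired $u_j$, and to swap the two coordinates carrying each pair of mutually paired orbitals --- no other cycle structure is possible.
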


\begin{proof}
We continue the proof of the previous lemma with $B=A_n$. The only difference is that now Case~(ii) is not excluded, and in last line, where we consider the product $\beta'\beta^{-1}$, we need to take into consideration that, according to in Case~(ii),  $\beta'\beta^{-1} =\tau$ is the product of transpositions of pairs $(u_i,u_j)$ corresponding to pairs of non-self-paired orbitals represented in (b). Then we have 
$\beta=\beta'\tau^{-1}$, and since $nsf(A)$ is even, $\tau\in A_n$, and consequently, $\beta\in A_n$, as required.
\end{proof}

\begin{Lemma}\label{l:aaa2}
If $rank(A) < n$ or $rank(A) =n$ and    $nsp(A)$ is odd, then  $A \Wr A_n  \notin GR$. 
\end{Lemma}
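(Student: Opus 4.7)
The plan is to produce a permutation $\phi \in Aut(G) \setminus (A \Wr A_n)$, where $G = G^*(A \Wr A_n)$, thus witnessing $A \Wr A_n \notin GR$. Take $\phi$ given by formula~(\ref{eq:vw}) with $\beta = (w_1,w_2)$ a fixed transposition of $W$ and $\alpha_w = id$ for every $w$. Since $\beta$ is odd, Lemma~\ref{l:fnotin} instantly yields $\phi \notin A \Wr A_n$, so the entire task reduces to verifying $\phi \in Aut(G)$. (The orthogonal cases $A \notin DGR$ and ``$A$ has transposable orbitals while lying outside $GR \cup \{I_2\}$'' are already settled by Lemmas~\ref{PNGR} and~\ref{l:trans}, so the interest is in the complement of Lemma~\ref{l:aaa}.)

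Fix an edge $\{f,g\}$ of $G$ and attach to $(f,g)$ the orbital-vector $\vec{O}$ on $W$, where $\vec{O}_w$ is the $A$-orbital containing the ordered pair $(f(w),g(w))$. I must exhibit $\psi \in A \Wr A_n$, with $W$-component $\gamma$ (even) and $V$-components $\alpha'_w \in A$, such that $\{f,g\}\phi = \{f,g\}\psi$. This splits into a \emph{non-swap case} $(f\phi,g\phi) = (f\psi,g\psi)$, solvable as long as $O_{w\gamma} = O_{w\beta}$ for every $w \in W$ (suitable $\alpha'_w$ are then automatic), and a \emph{swap case} $(f\phi,g\phi) = (g\psi,f\psi)$, solvable as long as $O_{w\gamma}$ equals the orbital $O^p_{w\beta}$ paired with $O_{w\beta}$ for every $w$.

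If $\vec{O}$ has a repeated entry $\vec{O}_{u_1} = \vec{O}_{u_2}$ with $u_1 \neq u_2$, set $\delta = (u_1,u_2)$ and $\gamma = \beta\delta$. Then $\delta$ preserves the fibres of $\vec{O}$, so $O_{w\gamma} = O_{(w\beta)\delta} = O_{w\beta}$ for every $w$, meeting the non-swap condition, and $\gamma$ is even as a product of two transpositions. The pigeonhole principle applied to the $n$ entries of $\vec{O}$ taking values in a set of only $rank(A)$ orbitals forces such a repetition whenever $rank(A) < n$, dispatching the first part of the hypothesis outright.

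The delicate branch is $rank(A) = n$ with $\vec{O}$ a bijection onto the set of orbitals of $A$; the non-swap case then forces $\gamma = \beta$, which is odd, so only the swap case remains. Let $\mu$ be the involution on orbitals pairing each $O$ with $O^p$; pull it back through the bijection $\vec{O}$ to an involution $\delta$ of $W$ whose cycle decomposition consists of precisely $nsp(A)$ transpositions (the self-paired orbitals becoming fixed points). Setting $\gamma = \beta\delta$, its sign is $(-1)^{1+nsp(A)}$, so $\gamma$ is even precisely because $nsp(A)$ is odd; and the $\alpha'_w$ exist in $A$ because both $(f(w\gamma),g(w\gamma))$ and $(g(w\beta),f(w\beta))$ sit in orbital $O^p_{w\beta}$. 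This parity calculation is the sole subtle point of the argument, and it mirrors exactly the parity count that forces $nsp(A)$ even in Lemma~\ref{l:aaa}: the two lemmas meet at the boundary $rank(A) = n$, with the parity of $nsp(A)$ deciding which side $A \Wr A_n$ lands on.
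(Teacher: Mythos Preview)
Your proof is correct and follows essentially the same approach as the paper. The paper proves the slightly stronger inclusion $\clo{A \Wr A_n} \supseteq A \Wr S_n$ by showing that \emph{every} $\phi \in A \Wr S_n$ with odd $W$-component can be matched on each edge by some $\phi' \in A \Wr A_n$, whereas you fix a single transposition $\beta=(w_1,w_2)$ and show that the corresponding $\phi$ lies in $Aut(G)\setminus(A\Wr A_n)$; but the core mechanism---pigeonhole to find a repeated orbital when $rank(A)<n$, and the pullback of the pairing involution with its $nsp(A)$ transpositions in the bijective case---is identical in both arguments.
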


\begin{proof}
It is enough to show that $\clo{A \Wr A_n} \supseteq A \Wr S_n$. To this end, for any edge $e=\{f,g\}$ in $G^*(A \Wr A_n)$ and any permutation $\phi\in A \Wr S_n$ we need to  show that there exists a permutation $\phi'\in A \Wr A_n$ such that if $e\phi = e\phi'$. Let $\phi$ be of the form (\ref{eq:vw}). If $\beta\in A_n$, we are done. So, assume that $\beta\in S_n\setminus A_n$.

Consider the first case with 
$n>rank(A)$. Then, there exists $w_1,w_2\in W$ such that either $f(w_1)=g(w_1)$ is in the same orbit as $f(w_2)=g(w_2)$ or 
$(f(w_1),g(w_1))$  and  $(f(w_2),g(w_2))$ are in the same orbital. Let $\alpha\in A$ be such that $f(w_1)\alpha = f(w_2)$, in the first case, or $(f(w_1),g(w_1))\alpha =(f(w_2),g(w_2))$, in the second case. 

Let $\tau=(w_1,w_2)$ denote the transposition of $w_1$ and $w_2$. Define $\beta'=\tau\beta$. Then, clearly, $\beta'\in A_n$. 
Identify $\beta'$ with the permutation of $V^W$ acting only as permuting fibres, and
let $\alpha$ be identified with the permutation of $V^W$ with $\alpha$ acting solely on the fiber $w_2$. 
Define $\phi'$ as $\alpha\beta'\alpha^{-1}\phi$. Then, obviously, $f\phi = f\phi'$ and $g\phi = g\phi'$
and consequently, $e\phi = e\phi'$, as required.

In the second case, $rank(A)=n$ and $nsp(A)$ is odd, we have either the situation as above (then the claim is proved) or the situation is as in the proof of Lemma~\ref{l:oo} with all $f(w_i)$ in different orbits and all pairs $(f(u_i),g(u_i))$ in different orbitals. Then, there exist a permutation $\tau$ being the product of transpositions of pairs $(u_i,u_j)$ corresponding to pairs of non-self-paired orbitals such that $e\phi_{\tau} = e$ (as before, by $\phi_{\tau}$ we denote the corresponding permutation of the form (\ref{eq:vw}) with $\beta=\tau$ and all $\alpha_w = id$). Since $nsp(A)$ is odd,   $\tau\notin A_n$. Consequently,  
since $\beta\in S_n\setminus A_n$, $\tau\beta \in A_n$. Hence, $\phi'= \phi_{\tau}\phi \in A\Wr A_n$. We have $e\phi'=e\phi_{\tau}\phi = e\phi$, which completes the proof.
\end{proof}

We summarize the results of this chapter. Let $DGR^+$ denotes the class of permutation groups $A\in DGR$ such that either $A$ has not transposable orbitals or $A\in GR \cup \{I_2\}$. The most complete result of this section is

\begin{Theorem}
If a permutation group $B\in BGR$, then for any permutation group $A$, the product $A\Wr B\in GR$ if and only if $A\in DGR^+$.
\end{Theorem}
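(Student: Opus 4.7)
The plan is to combine Lemmas~\ref{PNGR}, \ref{l:trans}, and \ref{BGR}, which were set up precisely so that this theorem becomes their direct summary. The class $DGR^+$ is defined so that membership exactly matches the hypothesis of Lemma~\ref{BGR}, while non-membership splits cleanly into the two scenarios that Lemmas~\ref{PNGR} and \ref{l:trans} rule out.

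For the ``if'' direction, assume $A \in DGR^+$ and $B \in BGR$. By the definition of $DGR^+$, $A \in DGR$ and either $A$ has no permutation transposing its orbitals or $A \in GR \cup \{I_2\}$. These are precisely the hypotheses of Lemma~\ref{BGR}, so that lemma, together with $B \in BGR$, yields $A \Wr B \in GR$.

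For the ``only if'' direction, I would argue by contrapositive. Suppose $A \notin DGR^+$. Unpacking the definition, one of two things happens: either $A \notin DGR$, or else $A \in DGR \setminus (GR \cup \{I_2\})$ and $A$ has transposable orbitals. In the first case, Lemma~\ref{PNGR} directly gives $A \Wr B \notin GR$. In the second case, Lemma~\ref{l:trans} applies and again $A \Wr B \notin GR$. Either way, we contradict $A \Wr B \in GR$, completing the argument. Note that the hypothesis $B \in BGR$ is used only in the ``if'' direction; the ``only if'' direction holds for every permutation group $B$.

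Since every step is pre-packaged by earlier lemmas, no real obstacle arises; the only thing to be careful about is matching the Boolean structure of the definition of $DGR^+$ to the two exclusion lemmas on one side and to the inclusion lemma on the other.
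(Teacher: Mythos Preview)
Your proof is correct and follows exactly the same approach as the paper, which simply states that the theorem follows from Lemmas~\ref{PNGR}, \ref{l:trans} and \ref{BGR}. You have merely made explicit the case split that the paper leaves implicit.
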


This follows from Lemmas~\ref{PNGR}, \ref{l:trans} and \ref{BGR}.
In case of directed graphs the same proofs apply when considering only the cases with the assumption that no permutation transposes orbitals of the group in question. This yields the following. 

\begin{Theorem}
If a permutation group $B\in BGR$, then for any permutation group $A$, the product $A\Wr B\in DGR$ if and only if $A\in DGR$.
\end{Theorem}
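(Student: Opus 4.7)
The plan is to mirror the proof of the preceding theorem, adapting each supporting lemma to the directed setting: the closure $\clo{A}$ is replaced throughout by $\cld{A}$, and $GR$ by $DGR$. The complications involving \emph{transposable orbitals} disappear entirely here, since $\cld{A}$ consists of permutations preserving each $2$-orbital individually (not merely up to pairing), so the side-conditions ``no permutation transposes orbitals of $A$'' and ``$A \in GR \cup \{I_2\}$'' are automatic once $A \in DGR$.

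For the only-if direction I would adapt Lemma~\ref{PNGR}. Suppose $A \notin DGR$, so there exists $\alpha \in \cld{A} \setminus A$. Fix $w_0 \in W$ and define $\phi$ by letting it act as $\alpha$ on the $w_0$-coordinate and as the identity elsewhere; by Lemma~\ref{l:fnotin}, $\phi \notin A \Wr B$. For any directed edge $(f,g)$ of $G(A \Wr B)$, the fact that $\alpha$ preserves the $2$-orbitals of $A$ produces some $\alpha' \in A$ agreeing with $\alpha$ on the ordered pair $(f(w_0), g(w_0))$; the corresponding $\phi' \in A \Wr B$ satisfies $(f,g)\phi = (f,g)\phi'$, so $\phi$ preserves every orbital. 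Hence $\phi \in \cld{A \Wr B} \setminus (A \Wr B)$, witnessing $A \Wr B \notin DGR$.

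For the if direction I would first establish a directed analog of Lemma~\ref{l:nto}: if $A \in DGR$, then $\cld{A \Wr B} = A \Wr B'$ for some $B \subseteq B' \subseteq \cld{B}$. The inclusion $\cld{A \Wr B} \subseteq \cld{A} \Wr \cld{B}$ is the directed analog of Lemma~\ref{l:awrb} and follows by the same Galois-connection argument, now applied to the pair $(Aut, G(\,\cdot\,))$; since $A = \cld{A}$, this already yields $\cld{A \Wr B} \subseteq A \Wr \cld{B}$, and the concluding group-theoretic step of Lemma~\ref{l:nto} produces $B'$. The delicate interior portion of Lemma~\ref{l:nto}, which handled the possibility of some $\alpha_w$ transposing paired orbitals, is not needed here, as that issue cannot arise in the directed setting.

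It remains to show $B' = B$ under $B \in BGR$, adapting Lemma~\ref{BGR}. Given $\gamma \in B' \setminus B$, take a colored hypergraph $H$ with $Aut(H) = B$ and subsets $X, Y \subseteq W$ of distinct colors in $H$ with $X\gamma = Y$. Fix distinct $v_0, v_1 \in V$, let $f \equiv v_0$, and let $f_X, f_Y$ take value $v_1$ on $X, Y$ respectively and $v_0$ elsewhere. Since $\phi_\gamma \in \cld{A \Wr B}$ sends the directed edge $(f, f_X)$ to $(f, f_Y)$, some $\phi \in A \Wr B$ of the form (\ref{eq:vw}) must satisfy $f\phi = f$ and $f_X\phi = f_Y$ (unlike the undirected case, no edge-reversal subcase arises). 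The first equality forces $v_0 \alpha_w = v_0$ for all $w$, and the second then forces $\beta$ to map $Y$ onto $X$, contradicting $\beta \in B = Aut(H)$. I do not anticipate a substantial obstacle: the directed case is strictly cleaner than the undirected one, as both the orbital-transposition phenomenon and the edge-reversal subcase vanish.
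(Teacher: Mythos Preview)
Your overall strategy matches the paper's, and your adaptations of Lemma~\ref{PNGR} and Lemma~\ref{BGR} to the directed setting are correct. There is, however, a genuine error in one step: the ``directed analog of Lemma~\ref{l:awrb}'' that you invoke, namely $\cld{A\Wr B}\subseteq \cld{A}\Wr\cld{B}$, is \emph{false} in general, and it does \emph{not} follow by the same Galois-connection argument. The paper's proof of Lemma~\ref{l:awrb} tacitly uses that $\clo{A}\Wr\clo{B}\in GR$ (i.e.\ Theorem~\ref{t:GR}), since the Galois step only yields $\clo{A\Wr B}\subseteq \clo{\,\clo{A}\Wr\clo{B}\,}$. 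The directed analog would require $\cld{A}\Wr\cld{B}\in DGR$, and the paper itself provides the counterexample: $S_2,C_3\in DGR$ but $S_2\Wr C_3\notin DGR$. Hence $\cld{S_2\Wr C_3}\supsetneq S_2\Wr C_3=\cld{S_2}\Wr\cld{C_3}$, contradicting your stated inclusion (and also your claim $B'\subseteq\cld{B}$).

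Fortunately this does not damage the approach, because you never actually use the upper bound $B'\subseteq\cld{B}$; you only need $\cld{A\Wr B}=A\Wr B'$ for \emph{some} subgroup $B'\subseteq S_W$. For that it suffices to show $\cld{A\Wr B}\subseteq A\Wr S_W$ when $A\in DGR$, and this can be argued directly from the orbital structure (much as in Lemma~\ref{l:i2} or in your own adaptation of Lemma~\ref{PNGR}): for $\phi\in\cld{A\Wr B}$ of the form~(\ref{eq:vw}), consider ordered pairs $(f,g)$ that differ only at a single $w_0$; the $A$-orbital of $(f(w_0),g(w_0))$ is an invariant of the $A\Wr B$-orbital of $(f,g)$, and preserving it forces each $\alpha_w\in\cld{A}=A$. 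With this correction in place, your concluding group-theoretic step and your directed version of Lemma~\ref{BGR} go through as written.
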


For $B\notin BGR$ we have only the following partial result.

\begin{Theorem} \label{th:Wr}
Let $(A,V)$ and $(B,W)$ be  permutation groups. If  $(B,W)\notin BGR$, then the following hold:  
\begin{enumerate}
\item 
If $A\notin DGR^+$, then $A\Wr B \notin GR$;

\item
If $A\in DGR^+$, and either 
 $rank(A)\geq |W|+1$ or 
$rank(A)=|W|$ and all orbitals of $A$ are self-paired,
then $A\Wr B \in GR$.
\end{enumerate}
\end{Theorem}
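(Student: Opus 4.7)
The plan is to dispatch both parts by direct appeal to the lemmas already established in this section, observing that the hypothesis $(B,W)\notin BGR$ plays no essential role: it only serves to delineate the regime complementary to the cleanly resolved case covered by Lemma~\ref{BGR}.

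For part (1), I would unfold the definition of $DGR^+$. The assumption $A\notin DGR^+$ splits into two subcases: either (i) $A\notin DGR$, or (ii) $A\in DGR\setminus(GR\cup\{I_2\})$ and $A$ has transposable orbitals. In subcase (i), Lemma~\ref{PNGR} yields $A\Wr B\notin GR$ with no hypothesis on $B$. In subcase (ii), Lemma~\ref{l:trans} applies and gives the same conclusion, again without any condition on $B$. Thus part (1) follows regardless of whether $B\in BGR$ or not.

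For part (2), the hypotheses are crafted precisely to match those of Lemma~\ref{l:oo}. Indeed, $A\in DGR^+$ is exactly the conjunction of $A\in DGR$ with the disjunction ``no permutation transposes orbitals of $A$ or $A\in GR\cup\{I_2\}$''; the rank condition is restated verbatim with $n=|W|$; and $B$ is trivially a subgroup of the symmetric group on $W$. Therefore Lemma~\ref{l:oo} directly yields $A\Wr B\in GR$, completing part~(2).

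There is no real obstacle here: the theorem is a packaging of Lemmas~\ref{PNGR}, \ref{l:trans}, and \ref{l:oo} under a single hypothesis. The only thing worth checking is that the assumption $(B,W)\notin BGR$ is genuinely not needed in either direction of the proof, which it is not: part (1) is a negative statement and imposing a further restriction on $B$ can only strengthen it, while Lemma~\ref{l:oo} has no requirement on $B$ beyond being a permutation group on $W$.
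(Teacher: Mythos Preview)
Your proposal is correct and matches the paper's approach: the theorem is stated in the paper without an explicit proof, precisely because it is a direct packaging of Lemmas~\ref{PNGR}, \ref{l:trans}, and~\ref{l:oo}, exactly as you indicate. Your observation that the hypothesis $(B,W)\notin BGR$ is not actually used in the argument is also correct; the paper includes it only to frame the result as complementary to the $B\in BGR$ case handled by Lemma~\ref{BGR}.
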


In addition, we have the full characterization in the cases when $A=S_V$ (Proposition~\ref{p:sv}) or  $B=A_n$. By Lemmas~\ref{l:oo}, \ref{l:aaa} and \ref{l:aaa2} we get (with $nsp(A)$ denoting the number of pairs of non-self-paired orbitals of $A$):

\begin{Proposition}
The product $A\Wr A_n \in GR$ if and only if $A\in DGR^+$ and one of the following holds
\begin{enumerate}
    \item $rank(A)\geq n+1$, or
    \item $rank(A)=n$ and $nsp(A)$ is even.
\end{enumerate}
\end{Proposition}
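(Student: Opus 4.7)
The plan is to derive this characterization directly from the three preceding lemmas (Lemma~\ref{l:oo}, Lemma~\ref{l:aaa}, and Lemma~\ref{l:aaa2}) by assembling them into both directions of the biconditional. No new construction is needed; the real content has already been done in those lemmas, and what remains is a careful enumeration of cases.

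For the forward direction, suppose $A\Wr A_n \in GR$. First I would argue that $A\in DGR^+$. If $A\notin DGR$, Lemma~\ref{PNGR} yields $A\Wr A_n \notin GR$. If $A\in DGR\setminus(GR\cup\{I_2\})$ and $A$ has transposable orbitals, then Lemma~\ref{l:trans} again gives $A\Wr A_n \notin GR$. Both possibilities contradict the hypothesis, so $A\in DGR^+$. Next, assume for contradiction that neither (1) nor (2) holds; then $rank(A)<n$, or $rank(A)=n$ with $nsp(A)$ odd. Either case falls into the hypothesis of Lemma~\ref{l:aaa2}, which gives $A\Wr A_n\notin GR$, again a contradiction. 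Hence (1) or (2) must hold.

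For the backward direction, assume $A\in DGR^+$. In case (1), where $rank(A)\geq n+1$, apply Lemma~\ref{l:oo} with $B=A_n\subseteq S_n$, which immediately yields $A\Wr A_n\in GR$. In case (2), where $rank(A)=n$ and $nsp(A)$ is even, apply Lemma~\ref{l:aaa} directly to conclude $A\Wr A_n\in GR$.

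The only bookkeeping subtlety, and not really an obstacle, is to confirm that the two clauses of Lemma~\ref{l:oo} and Lemma~\ref{l:aaa} fit together cleanly with the statement above: the sub-case of Lemma~\ref{l:oo} in which $rank(A)=n$ and all orbitals are self-paired is absorbed into case (2) of the proposition, since then $nsp(A)=0$ is even and Lemma~\ref{l:aaa} applies as well. Thus no overlap or gap remains, and the proof is complete.
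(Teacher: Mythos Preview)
Your proposal is correct and matches the paper's own approach: the paper simply states that the proposition follows ``By Lemmas~\ref{l:oo}, \ref{l:aaa} and \ref{l:aaa2}'' (with Lemmas~\ref{PNGR} and~\ref{l:trans} implicitly supplying the $DGR^+$ necessity, as in Theorem~\ref{th:Wr}(1)), and you have spelled out exactly this case analysis. Your observation that the ``all orbitals self-paired'' sub-case of Lemma~\ref{l:oo} is absorbed into case~(2) via $nsp(A)=0$ is a nice touch that the paper leaves implicit.
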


The above results hold for ininite groups and infinite cardinals $n$, as well. In case of directed graphs the situation is now more complicated. Our proofs may be easily modified to obtain analogous results in case of Lemma~\ref{PNGR}, Lemma~\ref{l:nto}, and Proposition~\ref{p:sv}. For other results, the main problem is that the counterpart of Theorem~\ref{t:GR} does not hold. The counterexample is given in the next section.

\section{Concluding remarks and open problems}

The main open problem raised by this paper is, as we have already mentioned, to complete the characterization in Theorem~\ref{th:Wr}. In view of the consideration in the previous section, this problem seems rather hard and may lead to many technical conditions. A more approachable, and a good starting point seems the following.
\medskip

\textbf{Problem 1}. Characterize those permutation groups $B$ for which the product $I_k \Wr B \in GR$.
\medskip

This may be connected with the problem of a characterization of the class $BGR$. In fact, we have very little examples of permutation groups not in $BGR$, i.e., those that are not the automorphism groups of colored hypergraphs. 
The well-known examples are the alternating groups $A_n$ for $n\geq 3$ and $C_4$ and $C_5$ (see \cite{GK1}). A few further examples of exceptional character are primitive groups given in \cite{ser} (cf. \cite[Theorem~2.6]{SV}) (in the terminology applied therein they are not \emph{orbit closed}). Using these examples one can construct further examples via various product operations. Yet, no characterization of $BGR$ is known.

Although the group $C_3 \notin BGR$, it belongs to $DGR$, and this exceptional property is used to construct a counterexample mentioned in the previous section. One may check directly that the group $C_2\Wr C_3$ is neither in $DGR$ nor in $BGR$. In particular, we have that $C_2,C_3 \in DGR$, which shows that the counterpart of Theorem~\ref{t:GR} for $DGR$ does not hold. Generally, one can prove that $S_V\Wr C_m \notin DGR$ for any  $m=3,4,5$.

The groups $C_3, C_4, C_5$  are exceptional in that all they belong to $DGR\setminus BGR$. One may construct more such examples using, again, various product operations, but we do not know any other 
transitive example not involving $C_i$ for $i=3,4,5$.
\medskip

\textbf{Problem 2}. Is there any primitive permutation group $A\in DGR $,  other than  $C_3$ and $C_5$, that does not belong to $BGR$?
\medskip

It is a good place to comment on Section~5 in \cite{SV} entitled ``Wreath products''. We need to note that Theorem~5.1 (saying that if $A,B\in BGR$, then $A\wr B\in BGR$, and if $B\notin BGR$, then $A\wr B\notin BGR$) 
has been proved much earlier in \cite[Theorem~15]{CK}, and in more general form in \cite[Theorems~5.3 and 5.4]{kis1}. Moreover, Corollary~5.3 in this section, giving \emph{via} the wreath product an infinite number of examples of groups in $BGR$ that are not \emph{relation groups} (meaning: are not the automorphism groups of hypergraphs) is false. Given as an example the group $L\wr C_3$  is not orbit closed (here and above we use our notation, reverse to that in \cite{SV}). The method suggested in the proof of this corollary yields on other good example. So the problem below, posed in \cite{kis1} (after discovering that \cite[Theorem~13]{CK} is false) remains still open.

First, recall that the Klein $4$-group $K_4 \subseteq S_2\wr S_2$ can be easily seen to be the automorphism group of a 3-colored hypergraph, but it is also easy to see that it is not the automorphism group of any (uncolored) hypergraph (cf. \cite{kis1}).
\medskip

\noindent\textbf{Problem 2}. Is there any example of a permutation group $G\in BGR$, other than $G=K_4$, that is  not the automorphism group of any (uncolored) hypergraph? 
\medskip

\footnotesize


\end{document}